\newcommand{\argmax}{\operatorname{argmax}}
\renewcommand{\div}{\operatorname{div}}
\newcommand{\Rr}{{\mathbb{R}}}
\newcommand{\Pp}{{\mathcal{P}}}
\newcommand{\Ll}{{\mathcal{L}}}
\newcommand{\Fff}{{\mathcal{F}}}
\newcommand{\bx}{{\bf x}}
\newcommand{\bdx}{\dot{\bf x}}
\newcommand{\bp}{{\bf p}}
\newcommand{\bdp}{\dot{\bf p}}
\newcommand{\bX}{{\bf X}}
\newcommand{\bdX}{\dot{\bf X}}
\newcommand{\bY}{{\bf Y}}
\newcommand{\bP}{{\bf P}}
\newcommand{\bdP}{\dot{\bf P}}
\newcommand{\bR}{{\bf R}}
\newcommand{\bQ}{{\bf Q}}
\newcommand{\bv}{{\bf v}}
\newcommand{\bz}{{\bf z}}
\newcommand{\bdz}{\dot{\bf z}}
\newcommand{\bq}{{\bf q}}
\newtheorem{teo}{Theorem}
\newtheorem{df}{Definition}
\newtheorem{cor}{Corollary}
\newtheorem{lemma}{Lemma}
\newtheorem{remark}{Remark}
\newtheorem{pro}{Proposition}
\begin{document}

\title{Extended deterministic mean-field games}

\author{Diogo
A. Gomes\footnote{
King Abdullah University of Science and Technology (KAUST), CEMSE
Division, Thuwal 23955-6900. Saudi Arabia. e-mail: diogo.gomes@kaust.edu.sa} ,
Vardan K. Voskanyan\footnote{
King Abdullah University of Science and Technology (KAUST), CEMSE
Division, Thuwal 23955-6900. Saudi Arabia. e-mail vartanvos@gmail.com }
}

\date{\today} 

\maketitle

\begin{abstract}
In this paper, we consider mean-field games where the interaction of each player with the mean-field takes into account not only the states
of the players but also their collective behavior,
To do so, we develop a random variable framework that is particularly convenient for these problems.
We prove an existence result for extended mean-field games and
establish uniqueness conditions.
In the last section, we consider the Master Equation and discuss properties of its solutions.
\end{abstract}

\thanks{D. Gomes was partially supported by KAUST baseline and start-up funds and 
KAUST SRI, Center for Uncertainty Quantification in Computational Science and Engineering.}

\thanks{V.Voskanyan was supported
by KAUST baseline and start-up funds and 
KAUST SRI, Center for Uncertainty Quantification in Computational Science and Engineering.	
}

\section{Introduction}
\label{intro}

Mean-field games, MFGs, (see \cite{Caines2}, \cite{Caines1}
and \cite{ll1, ll2, ll3, ll4}) describe the behavior of systems
involving a large
number of rational agents who play dynamic games under partial information and symmetry assumptions.
For recent surveys on MFGs, see \cite{cardaliaguet, GS, llg2}.
In many applications, including crowd dynamics and economic problems, the behavior
of each agent depends on the statistical properties of the distribution of the agents and 
their collective actions. In the stationary case, such an extension was introduced and studied in \cite{GPatVrt}.
Here, we use a random variable formulation to examine this class of problems. 
These games are defined by the following system:
\begin{equation}
\label{mfge}
\begin{cases}
-u_t(x,t)+H( x, D_xu(x,t),\bX, \bdX)=0,\\
\bdX=-D_pH(  \bX, D_xu(\bX,t), \bX,\bdX),\\
u(x, T)=\psi(x, \bX(T)).
\bX(0)=X_0.
\end{cases}
\end{equation}
In the previous system, the unknowns are the value function $u(x,t)\colon \Rr^d\times [0,T]\to \Rr$ and a path in a space of random variables $\bX \colon [0, T]\to L^q(\Omega; \Rr^d)$,  where $(\Omega, \Fff, P)$ is a probability space and $1\leq q<\infty$.
The Hamiltonian $H\colon \Rr^d\times\Rr^d\times L^q(\Omega; \Rr^d)\times L^q(\Omega; \Rr^d)\to \Rr $, the terminal cost $\psi\colon\Rr^d\times L^q(\Omega; \Rr^d)\to\Rr$, and the initial state $X_0\in L^q(\Omega; \Rr^d)$ of the population are given. Detailed assumptions are presented in Section \ref{exist}. Two Hamiltonians for which those assumptions hold are
$$
H(x, p, X, Z)=\frac{|p|^2}{2}+\beta p EZ+V(x, X) \quad \text{ and }\quad H(x, p, X, Z)=\frac{|\beta E Z +p|^2}{2}+V(x, X),
$$
where $V\colon \Rr^d\times L^q(\Omega)\to \Rr$ is bounded,
$V(x,X)$ is twice differentiable in the $x$ variable, $|D_xV|$ and $|D^2_{xx}V|$ are uniformly bounded,  
and $V$ is Lipschitz in the variable $X$. Regarding the terminal cost, we suppose that $\psi(x,X)$ is
a continuous function, bounded by below, and uniformly Lipschitz in the first coordinate, $x$. 

Our main result is a proof of existence of solutions to extended mean-field games:
\begin{teo}
	\label{teoexist}
	Under Assumptions \ref{x0}-\ref{hjuniqc}(cf. Section \eqref{exist}), there exist a Lipschitz continuous function, $u\colon \Rr^d\times [0,T]\to\Rr$, and a path on the space of random variables, $\bX\in C^{1,1}([0,T],L^q(\Omega, \Rr^d))$, such that $(u,\bX)$ solves \eqref{mfge}. More precisely,
	$u\in C([0,T]\times\Rr^d; \Rr)$ is a viscosity solution of the Hamilton-Jacobi equation:
	\[
	\begin{cases}
	-u_t+H(  x, D_xu, \bX, \dot{\bX})=0,\text{ in } [0,T]\times\Rr^d,\\
	u(x, T)=\psi(x, \bX(T)), 
	\end{cases}
	\]
	where $u$ is $P$-a.s. differentiable at every point $(\bX(t),t),\, t>0$,
	and $\bX\in C^{1,1}([0,T];L^q(\Omega, \Rr^d))$ is a solution of the ODE:
	\[
	\begin{cases}
	\dot \bX=-D_pH(\bX,  D_xu(\bX,t), \bX,\dot{\bX}), \text{ in } [0,T]\times\Omega\\
	\bX(0)=X_0.
	\end{cases}
	\]
\end{teo}
The key difficulty in establishing the previous theorem is that, in general, the Hamilton-Jacobi equation in \eqref{mfge} does not admit classical solutions. 
Therefore, the right-hand side of the infinite dimensional ODE determining $\bdX$ in \eqref{mfge} is not locally Lipschitz. 
The proof of this theorem uses a new fixed-point argument. We observe that 
even for the original mean-field game problem, that is, if $H$ does not depend on $\bdX$, 
our results are not implied from the existing results in the literature;
for example see \cite{cardaliaguet} and Remark \ref{rmk2} at the end of Section 4. 

We conclude this introduction, by giving a brief outline of the paper. 
We begin Section \ref{formulation} with a concise discussion of the original formulation by Lasry and Lions of mean-field games as a transport equation coupled with a Hamilton-Jacobi equation. Then, in Section \ref{ranvar}, we develop a reformulation of this problem as an ordinary differential equation for a random variable in $L^q(\Omega)$ coupled with a Hamilton-Jacobi equation. This formulation is similar to the one used in \cite{Caines1}. These ideas were explored in \cite{ Carmona3, Carmona2, Carmona1} for problems 
with common noise. Next, in Section \ref{emfg}, we discuss extended MFGs, where we derive \eqref{mfge}. In addition, in Section \ref{exmp}, we present some examples for which the solutions can be found explicitly. 
In Section \ref{exist}, we prove the existence of solutions to the extended mean-field game system. Next, 
we discuss conditions for the absolute continuity of the law in Theorem \ref{abscont}. 
Concerning the uniqueness, we first consider a version of the Lasry-Lions monotonicity argument for classical solutions (Theorem \ref{unq1}) and an additional improvement for viscosity solutions (Theorem \ref{unq1'}). Then, we present the second approach to the uniqueness that uses the optimality nature of the solutions. We give a uniqueness condition (in terms of the Lagrangian (Theorem \ref{unq2}))
that does not require the absolute continuity of the law or regularity of the solution. Finally, in Section \ref{chp: furthprob}, we consider the Master Equation and discuss some of its properties.


\section{Two formulations of deterministic mean-field games}
\label{formulation}

In this section, we review the original formulation for deterministic mean-field games
from  \cite{ll1,ll2,ll3,ll4}. Then, we discuss a reformulation in terms of random variables. This set-up is very close to the one used in \cite{Caines1} (although they considered second-order equations) and it is particularly suited for the extensions we study here.

The standard mean-field game setting models a population in which each individual has a state $x\in \Rr^d$ and has access to the probability distribution  of the remaining players' states. 
We denote by $\Pp(\Rr^d)$ the set of probability measures in $\Rr^d$.
This set is a metric space endowed with the Wasserstein metric $W_2$, see, for instance, \cite{Villanithebook}.
At each time $t,$ the population is characterized  by a probability measure $\theta(t)\in \Pp(\Rr^d)$.
Each player seeks to  minimize a performance criterion. For this, let
$L:\Rr^d\times \Rr^d\times \Pp(\Rr^d)\to \Rr$ be a running cost, and
$\psi: \Rr^d\times \Pp(\Rr^d)\to \Rr$ be a terminal cost.
For definiteness, in this section, we suppose that both $L$ and $\psi$ satisfy standard hypotheses for optimal control problems, that is,
\begin{enumerate}[label=\alph*)]
\item\label{a}
$L$ and $\psi$ are continuous and bounded by below, without loss of generality, we can assume $L,\ \psi\geq 0.$
\item\label{b}
$\psi$ is Lipschitz in the first coordinate.
\item\label{c}
$L$ is coercive:
\[
\frac{L(x, v, \theta)}{|v|}\xrightarrow{|v|\to\infty}\infty, \text{ uniformly in } x.
\]
\item\label{d}
$L$ is uniformly convex in $v$.

\item\label{e}There exists a constant $C>0$, such that $L(x,0,\theta)\leq C$, $|D_xL(x,v,\theta)|, \ |D_vL(x,v,\theta)|\leq C\left[ 1+L(x,v, \theta) \right]$, for any $x,v\in\Rr^d$ and $\theta\in\Pp(\Rr^d).$
\end{enumerate}

An example that satisfies \ref{a}-\ref{e} is
\begin{equation}
\label{se}
L(x, v, \theta)=\frac{|v|^2}2-\int_{\Rr^d} V(x, y) d \theta(y),
\end{equation}
where $V\colon\Rr^d\times\Rr^d\to\Rr$ has a bounded $C^1$ norm.

We suppose 
each player directly controls his speed, that is, his state evolves according to $\bdx=v$. Assume that the distribution of the players is given by
a continuous curve
$\theta:[0,T]\to \Pp(\Rr^d)$. 
Fix a reference player, and let $x$ be the location of that player at time $t$. 
His value function is determined by the optimal control problem:
\[
u(x, t)=\inf_{\bx} \int_t^T L(\bx, \bdx, \theta(s)) ds+\psi(\bx(T), \theta(T)),
\]
where the infimum is taken over all Lipschitz trajectories with $\bx(t)=x.$
By Assumption \ref{a}, the map
$
(x,v,t)\mapsto L(x,v,\theta(t))
$
is continuous. Due to Assumptions \ref{c}-\ref{e}, it satisfies standard assumptions in control theory  (cf. \cite{FS}, Chapter I, sec. 9). Then,
Assumption \ref{b} gives that 
$u$ is bounded and globally Lipschitz.
For $( x, p,\theta)\in \Rr^d\times \Rr^d\times \Pp(\Rr^d),$ we
define the Hamiltonian
\[
H(x, p, \theta)=\sup_{v\in \Rr^d} -v\cdot p-L(x, v, \theta).
\]
 Under the above assumptions, $u$ is the unique globally Lipschitz and semiconvex viscosity solution of
the Hamilton-Jacobi equation
\begin{equation}
\label{hj}
-u_t+H(x, D_xu,  \theta)=0
\end{equation}
satisfying the terminal condition $u(x, T)=\psi(x, \theta(T))$. Moreover, if $u$ is a classical solution to \eqref{hj}, 
the optimal trajectories are given by
\begin{equation}
\label{feedbackc}
\bdx(s)=-D_pH(\bx(s), D_xu(\bx(s), s), \theta(s)).
\end{equation}
In mean-field games, all players have access to the same statistical information and act in a rational way. Therefore, each one of them follows the optimal trajectories \eqref{feedbackc}.  Consequently, the probability distribution of players is transported by the vector field
$-D_pH(x,  D_xu(x, t), \theta(t))$. Thus, in this MFG model, we require $\theta$ to be a (weak) solution
to the equation
\[
\theta_t-\div(D_pH(x,  D_xu,\theta)\theta)=0,
\]
satisfying the initial condition $\theta(0)=\theta_0\in \Pp(\Rr^d)$.
The probability measure $\theta_0$ encodes the distribution of players at $t=0$.
The above reasoning leads to the
system:
\begin{equation}
\label{mfgo}
\begin{cases}
-u_t+H(x,  D_xu, \theta)=0\\
\theta_t-\div(D_pH( x, D_xu,\theta)\theta)=0,
\end{cases}
\end{equation}
subjected to the initial and terminal conditions
\begin{equation}
\label{initcond}
\begin{cases}
u(x, T)=\psi(x, \theta(T))\\
\theta(x, 0)=\theta_0.
\end{cases}
\end{equation}
A second-order version of \eqref{mfgo}  was first introduced and studied  in \cite{ll2}. Detailed proofs of the existence and uniqueness of solutions to those systems can be  found in \cite{cardaliaguet}.
 Existence of smooth solutions of mean-field games for sub-quadratic and super-quadratic Hamiltonians with power-like and logarithmic local dependence on the population density has been  established in \cite{GPim1}, \cite{GPim2}, \cite{GPM3}, and \cite{GPM2}. Weak solutions have been considered in 
	\cite{porretta}, \cite{porretta2}, and \cite{cgbt}.

\subsection{Random variable framework}
\label{ranvar}

Let $(\Omega, \Fff, P)$ be a probability space, where $\Omega$ is an arbitrary nonempty set,
$\Fff$ is a $\sigma$-algebra on $\Omega$, and $P$ is a probability measure.
We recall that an $\Rr^d$-valued random variable $X$ is a $\Fff$-measurable function $X: \Omega\to \Rr^d$.
We denote by $L^q(\Omega, \Rr^d)$ the set of $\Rr^d$-valued random variables with a finite $q$-th moment, $E|X|^q<\infty$.
The law $\Ll(X)$ of an $\Rr^d$-valued random variable is the probability measure
in $\Rr^d$ defined by
\[
\int\limits_{\Rr^d} \varphi(x) d\Ll(X)(x)=E \varphi(X).
\]
Since all relevant random variables are $\Rr^d$ valued, we write $L^q(\Omega)$ instead of $L^q(\Omega, \Rr^d)$
to simplify the notation.
Next, we reformulate the mean-field game problem by replacing the probability $\theta(t)$ encoding the distribution of players by a
random variable $\bX(t)\in L^q(\Omega)$. The law of $\bX(t)$ determines the distribution of players, that is $\theta(t)=\Ll(\bX(t))$. Each outcome of the random variable $\bX$ gives the state of a particular player chosen accordingly to
the probability $\theta$. For each measure $\theta$, there is
an infinite number of random variables with law $\theta$. However, this ambiguity is harmless and does not create any technical difficulty. 

A function $f\colon L^q(\Omega)\to\Rr$ depends only on the law if for any $X, \tilde X\in L^q(\Omega)$ such that, $\Ll(X)=\Ll(\tilde X)$, we have $f(X)=f(\tilde X)$. Let
$\Pp_q(\Omega)$ be the set of probability measures $\theta\in\Pp(\Omega)$ with finite $q$-th moment: $\int_{\Rr^d}|x|^qd\theta(x)<+\infty$.
For $\eta: \Pp(\Rr^d)\to \Rr$, we define $\tilde \eta: L^q(\Omega; \Rr^d)\to \Rr$, by
\begin{equation}
\label{tildeq}
\tilde \eta(X)=\eta(\Ll(X)), 
\end{equation}
for $X\in L^q(\Omega; \Rr^d)$. Clearly $\tilde \eta$ depends only on the law of $X$.
Using this construction, we can identify functions in $\Pp_q(\Omega)$ with functions in $L^q(\Omega)$
that depend only on the law. 
Because there is no ambiguity, to simplify the notation, we omit the tilde in \eqref{tildeq} and
write without distinction $\eta(X)$ or $\eta(\Ll(X))$.

In this new setting, the Lagrangian is the function $L:\Rr^d\times \Rr^d\times L^q(\Omega)\to \Rr$, $L(x, v, X)$, and the terminal cost is the function $\psi: \Rr^d\times L^q(\Omega; \Rr^d)\to \Rr$, $\psi(x,X)$.
Again, for definiteness, we assume here that
\begin{enumerate}[label=\alph*$'$)]
	
	\item\label{a'} $L(x, v, X)$ and $\psi(x, X)$ depend only on the law of $X$.
	
	\item\label{b'}
	$L$ and $\psi$ are continuous functions in all variables and are bounded by below.
	\item\label{c'}
	$\psi$ is Lipschitz in the first coordinate.
	\item\label{d'}
	$L$ is coercive:
	\[
	\frac{L(x, v, X)}{|v|}\xrightarrow{|v|\to\infty}\infty, \text{ uniformly in } x.
	\]
	\item\label{e'}
	$L$ is strictly convex in $v$.
	\item \label{f'}
	There exists a constant $C>0$, such that $L(x,0,X)\leq C,$ $|D_xL(x,v, X)|, |D_vL(x,v, X)|\leq C\left(1+L(x,v, X)\right)$, for any $x,v\in\Rr^d$ and $X\in L^q(\Omega).$
\end{enumerate}

A Lagrangian that satisfies the above assumptions is
\[
L(x, v, X)=\frac{|v|^2}2- E V(x, X), 
\]
for $V:\Rr^d\times\Rr^d\to \Rr$ with bounded $C^1$ norm. This Lagrangian 
is the analog of \eqref{se}.

As before, suppose a given player knows the distribution of the remaining players. This distribution is determined by a path
$\bX\in C([0,T]; L^q(\Omega))$. He/she seeks to minimize a performance criterion, comprising a running cost
$L:\Rr^d\times \Rr^d\times L^q(\Omega; \Rr^d)\to \Rr$ and a terminal cost
$\psi: \Rr^d\times L^q(\Omega; \Rr^d)\to \Rr$.

The value function for
a reference player who is in the state $x$ at time $t$ is
\[
u(x, t)=\inf_{\bx} \int_t^T L(\bx, \bdx,\bX(s)) ds+\psi(\bx(T), \bX(T)).
\]
 For each trajectory $\bX\in C([0,T]; L^q(\Omega))$,
 by Assumption \ref{b'}, the function $(x, v, t)\mapsto L(x,v,\bX(t))$ is continuous. Furthermore, due to Assumptions \ref{c'}-\ref{f'}, it satisfies standard assumptions of the control theory (cf. \cite{FS}, Chapter I, sec. 9). Thus, $u$ is bounded and globally Lipschitz.
As before, for $( x, p,X)\in \Rr^d\times \Rr^d\times L^q(\Omega)$, the Hamiltonian
$H: \Rr^d\times \Rr^d\times L^q(\Omega)\to \Rr$
is given by
\[
H( x, p,X)=\sup_{v\in \Rr^d} -v\cdot p-L(x, v, X).
\]
The function $H( x, p, X)$ depends only on the law of the last coordinate. Accordingly,
if $X, \tilde X\in L^q(\Omega)$ have the same law (i.e., $\Ll(X)=\Ll(\tilde X)$) then
\[
H( x, p,X)=H( x, p,\tilde X).
\]
Under Assumptions \ref{b'}-\ref{f'},  $u$ is the unique bounded, globally Lipschitz and semiconvex viscosity solution of the Hamilton-Jacobi equation
\[
-u_t(x,t)+H( x, D_xu(x,t), \bX(t))=0
\]
with the terminal condition $u(x, T)=\psi(x, \bX(T))$.

Because of the rationality hypothesis,
a typical player $\omega\in \Omega$ has a trajectory determined by
\[
\bdX(s)(\omega)=-D_pH(\bX(s)(\omega), D_xu(\bX(s)(\omega), s), \bX(s)).
\]
This results in the system:
\[\label{mfgrva}
\begin{cases}
-u_t(x,t)+H( x, D_xu(x,t),\bX(t))=0\\
\dot \bX(t)(\omega)=-D_pH(\bX(t)(\omega), D_xu(\bX(t)(\omega),t), \bX(t)),
\end{cases}\tag{A}
\]
where the initial and terminal conditions \eqref{initcond} are replaced by
\[
\label{mfgrvb}
\begin{cases}
u(x, T)=\psi(x, \bX(T))\\
\bX(0)(\omega)=X_0(\omega),
\end{cases}
\tag{B}
\]
and $\Ll(X_0)=\theta_0$. Here, the transport equation in \eqref{mfgo} is replaced by an infinite-dimensional ODE in \eqref{mfgrva}.
To simplify, we omit $\omega$ in \eqref{mfgrva} and \eqref{mfgrvb}.
The connection between the two formulations is a consequence of the next well-known result:
Let $b\colon\Rr^d\times[0,T]\to\Rr^d$ be a bounded, continuous, uniformly Lipschitz in $x$  vector field over $\Rr^d$ and
let $\bX:[0,T]\times \Omega\to \Rr^d$ be a solution to
\begin{equation}
\label{forwardeq}
\bdX=b(\bX, t).
\end{equation}
Then, the law $\theta=\Ll(\bX)$ is a weak solution to
\[
\theta_t+\div(b\cdot \theta)=0,
\]
with the initial condition $\theta(0)=\Ll(\bX(0))$.

A proof of a stochastic version of this fact can be found in \cite{cardaliaguet} (Lemma 3.3). The proof of the present result follows along similar lines.



\section{Extended mean-field games}
\label{emfg}

In many applications, it is natural to consider mean-field games where the payoff of each player depends on the actions of the remaining players.
In the random variable framework, this corresponds to running costs that depend
on $\bdX$.
As before, we assume that the distribution of the players
is represented by a curve of random variables $\bX(t)\in L^q(\Omega)$. We require $\bX$ to be differentiable
with derivative $\bdX(t)\in L^q(\Omega)$.

Each player seeks to minimize a performance criterion. For this, let
$L:\Rr^d\times \Rr^d\times L^q(\Omega)\times L^q(\Omega)\to \Rr$ be a Lagrangian and 
$\psi: \Rr^d\times L^q(\Omega)\to \Rr$ be a terminal cost.
We assume that  $L(x, v, X, Z)$
depends only on the joint law of $(X, Z)$,  that is, if
$X, Z,\tilde X, \tilde Z\in L^q(\Omega)$ satisfy $\Ll(X, Z)=\Ll(\tilde X, \tilde Z)$ then
\[
L(x, v, X, Z)=L(x, v, \tilde X,\tilde Z). 
\]
Additionally, we require $\psi(x, X)$ to depend only on the law of $X$. To ensure
that the formulation of the problem 
makes sense, we need additional assumptions on $L$ and $\psi$. 
These are discussed in detail in the next section, however, for convenience, here, we outline the main requirements:
\begin{itemize}
\item[-]
$\psi$ is continuous and Lipschitz in the first coordinate (Assumption \ref{psi});
\item[-]
$L(x,v,X,Z)$ is coercive and uniformly convex in $v$ (Assumption \ref{lcnvcoer});	
\item[-]
$L$ is satisfy suitable continuity assumptions  and bounds by below (Assumptions \ref{JUY} and \ref{ab12}).
\end{itemize}
Additional assumptions are required
for existence and uniqueness results, 
which will be discussed later. 

The value function
for a player at state $x$ at time $t$ is
\begin{equation}
\label{controlf}
u(x, t)=\inf_{\bx} \int_t^T L(\bx, \bdx,   \bX(s), \bdX(s)) ds+\psi(\bx(T), \bX(T)),
\end{equation}
where the infimum is taken over all absolutely continuous trajectories $\bx:[t,T]\to \Rr^d$ with $\bx(t)=x$.
As before, the Hamiltonian $H: \Rr^d\times \Rr^d\times L^q(\Omega)\times L^q(\Omega)\to \Rr$ is given by
\[
H( x, p,X,Z)=\sup_{v\in \Rr^d} -v\cdot p-L(x, v, X, Z).
\]
We have that $H( x, p, X, Z)$ depends only on the joint law of the last two coordinates.

An important example is the Lagrangian
\begin{equation}
\label{standardL}
L( x, v,  X, Z)=\frac{|v|^2}{2}+\beta v\cdot E Z-V(x, X),
\end{equation}
to which corresponds the Hamiltonian
\begin{equation}
\label{standardH}
H( x, p, X, Z)=\frac{|\beta E Z +p|^2}{2}+V(x, X).
\end{equation}

If a player knows the trajectory $\bX(t)$ of the remaining players, the value function $u$ is determined by the Hamilton-Jacobi equation:
\[
-u_t+H(x, D_xu,\bX, \bdX)=0.
\]
Hence, by the mean-field hypothesis, each player follows the optimal trajectory determined by
\[
\bdx=-D_pH( \bx, D_xu(\bx,t),\bX, \bdX).
\]
Thus, we are led to the extended mean-field game
\begin{equation}
\begin{cases}
-u_t+H( x, D_xu,\bX, \bdX)=0\\
\bdX=-D_pH(  \bX, D_xu(\bX,t), \bX,\bdX),
\end{cases}
\end{equation}
with
\begin{equation}
\begin{cases}
u(x, T)=\psi(x, \bX(T))\\
\bX(0)=X_0.
\end{cases}
\end{equation}

\subsection{Examples}

\label{exmp}
The second equation in \eqref{mfge} involves a fixed-point problem for $\bdX$ that poses additional difficulties.
To ensure solvability, one possibility is to assume that
\begin{equation}
\label{contraction}
|D_pH( x, p, X, Z)-D_pH( x, p, X,  \tilde Z)|\leq \rho \|Z-\tilde Z\|_{  \ L^q(\Omega)},
\end{equation}
for some $\rho< 1$  and all $x,p\in \Rr^d, X, Z\in L^q(\Omega)$. In this last case, the equation

\begin{equation}
\label{velocity}
Z=-D_pH( X, P, X, Z),
\end{equation}
for  $X, P\in L^q(\Omega)$,
has a unique solution  $Z\in L^q(\Omega)$, by a standard contraction argument.

However, it may not be appropriate for all applications to impose such restrictive assumptions.
For instance,  the Hamiltonian \eqref{standardH} does not satisfy \eqref{contraction}.
Nevertheless, \eqref{velocity} becomes
\[
Z=-\beta E Z -P
\]
that has a unique solution $Z$, if $1+\beta\neq 0$. Indeed,
\[
Z+\beta EZ=-P.
\]
From this, we gather $EZ=-\frac 1 {1+\beta} EP$. Consequently,
$
Z=\frac \beta {1+\beta} EP-P.
$

In most cases,  \eqref{mfge} cannot be solved explicitly. In what follows, we present two examples with explicit solutions.

\subsection{Linear - Quadratic}
\label{linquad}

We consider the Hamiltonian
\[
H( x, p,X,Z)=\frac{|p+\beta EZ|^2}{2}+\frac 1 2 x^T A(X) x+B(X) \cdot x +C(X),
\]
where $A:L^q(\Omega)\to \Rr^{d\times d}$, $B: L^q(\Omega)\to \Rr^d$, and  $C: L^q(\Omega)\to \Rr$ are Lipschitz.
The term $\frac{|p+\beta EZ|^2}{2}$ in the Hamiltonian results in a corresponding term
$\frac{|v|^2}{2}-\beta v\cdot EZ$
in the Lagrangian.
For $\beta<0$, it penalizes players who move in the same direction as the aggregate
of the players, encoded here in $EZ$.   

We assume that the terminal condition is also quadratic in $x$, we
have
\[
\psi(x, X)=\frac{1}{2}x^TM(X) x+N(X)\cdot x+Q(X),
\]
with
$M:L^q(\Omega)\to \Rr^{d\times d}$, $N: L^q(\Omega)\to \Rr^d$, and  $Q: L^q(\Omega)\to \Rr$.

Due to the quadratic structure, we look for solutions of the form
\[
u(x, t)=\frac 1 2 x^T \Gamma(t) x +\Theta(t)\cdot x + \zeta(t).
\]
Using separation of variables, we obtain the system of differential equations
\[
\begin{cases}
-\dot \Gamma+\frac 1 2 \Gamma^T \Gamma+A(X)=0\\
-\dot \Theta+\beta \Gamma E\dot X+\Gamma \Theta +B(X)=0\\
-\dot \zeta+\frac 1 2 |\Theta+\beta E\dot X|^2+C(X)=0,
\end{cases}
\]
with terminal conditions
\[
\Gamma(T)=M(\bX(T)),\, \Theta(T)=N(\bX(T)),\, \zeta(T)=Q(\bX(T)).
\]
This system is coupled with the forward equation
\[
\begin{cases}
\bdX=-\Gamma \bX-\Theta-\beta E\bdX,\\
\bX(0)=X_0,
\end{cases}
\]
that is,
\[
\begin{cases}
\bdX=-\Gamma \bX-\frac 1 {1+\beta} \Theta+\frac{\beta}{1+\beta}(\Gamma E \bX)\\
\bX(0)=X_0.
\end{cases}
\]

\subsection{A second example }
\label{2exmp}

In addition to linear-quadratic Hamiltonians, we were able to find another class of problems, for which  \eqref{mfge} can be solved explicitly.
We consider the Lagrangian
\[
L(x, v, X,Z)=\frac{|v|^2}{2}+x^4+U(X,Z).
\]
This example features a quartic repulsive potential and allows for a arbitrary dependence on the mean-field through the function $U(X,Z)$, that we assume here to be continuous.
In contrast to the setting of Section \ref{formulation}, we assume here that players follow a distinct dynamic, namely: $\dot{\bx}=f(\bx, v),$ where $f(x,v)=\frac{v}{x}$.  Also,  we suppose that the terminal cost is of the form $\psi(x,X)=A(X)x^4+B(X)$.
Then, the Hamiltonian is
\[
H( x, p, X, Z)=\frac{|p|^2}{2x^2}-x^4-U(X,Z).
\]

Thanks to the distinctive structure of the Hamiltonian, $D_pH$ does not depend on $Z.$ So, it is trivial to solve the equation
$\bdX=-D_pH(\bX, \bP, \bX, \bdX)$. Moreover, the joint distribution of $(X,Z)$ gives an additive cost in the Lagrangian, identical for any player regardless of their location. 

The mean-field equations are
\[
\begin{cases}
-u_t+\frac{|Du|^2}{2x^2}-x^4-U(\bX,\dot{\bX})=0\\
\dot{\bX}=-\frac{Du(\bX,t)}{\bX^2}\\
u(x,T)=\psi(x,\bX(T)).
\end{cases}
\]
We look for solutions of the form $u(x,t)=x^4p(t)+q(t)$.
Note that the power $x^4$ is the only power for which we can use this separation of variables method.
This specific choice for $u$ gives
\[
\begin{cases}
p'-8p^2+1=0\\
q'=-U(X,\dot{X})\\
\dot{X}=-4pX\\
p(T)=A(\bX(T)),\, q(T)=B(\bX(T))\\
\bX(0)=X_0.
\end{cases}.
\]
Elementary computations yield
$$
p(t)=\frac{1}{2\sqrt{2}}\frac{1+ce^{4\sqrt{2}t}}{1-ce^{4\sqrt{2}t}},
$$
and
$$
\bX(t)=\left[\frac{ce^{4\sqrt{2}t}-1}{c-1}\right]^{-\frac{1}{2}}e^{\sqrt{2}t}X_0.
$$
Next, we determine the constant $c$ from the terminal condition on $p.$ Then
\[
u(x,t)=\frac{x^4}{2\sqrt{2}}\frac{1+ce^{4\sqrt{2}t}}{1-ce^{4\sqrt{2}t}}+q(t),
\]
where $q(t)$ solves $\dot{q}=-U(\bX,\dot{\bX})$ with $q(T)=B(\bX(T)).$

\section{Existence of solutions to extended mean-field games}
\label{exist}
Here, we study the existence of solutions to \eqref{mfge}. In what follows, $1\leq q< \infty$,
\[
\psi\colon\Rr^d\times L^q(\Omega)\to\Rr,
\]
\[
L\colon\Rr^d\times\Rr^d\times L^q(\Omega)\times L^q(\Omega)\to\Rr,
\]
and $H=L^*$ is the Legendre transform of $L$ defined by
\[
H( x, p, X, Z)=\sup\limits_v \{\,-v\cdot p-L(x, v, X,Z)\,\}.
\]
Moreover, 
we assume that $L(x, v, X, Z)$
depends only on the joint law of $(X, Z)$ and that $\psi(x, X)$ depends only on the law of $X$.

To prove existence of solutions to \eqref{mfge}, we 
require $X_0$, $\psi$, and $L$ to satisfy the following properties:

\begin{enumerate}[label=\Alph*)]
\item
\label{x0}
$X_0\in L^q(\Omega)$ and has an absolutely continuous law.
\item
\label{psi}
$\psi$ is Lipschitz continuous in $x$, and it is bounded.
\item
For any $x\in\Rr^d $ and $X,Z \in L^q(\Omega)$,
$L(x, v, X,Z)$ is uniformly convex in $v$ and satisfies the coercivity condition
$$
\lim_{|v|\to\infty}\frac{L(x, v, X,Z)}{|v|}=\infty,
$$
uniformly in $x$.
\label{lcnvcoer}
\item
\label{JUY}
 There exists a constant $c_0,$ such that $L(x, v, X,Z)\geq -c_0E(|X|^q+|Z|^q+1)$.
 \item 
\label{ab12}
There exist a constant $c_1>0$ and a continuous function $v_0\colon L^q(\Omega)\times L^q(\Omega)\to \Rr$, such that $L(x,v_0(X,Z),X,Z)\leq c_1$.
\item
\label{DL} There exist constants $c_2, c_3>0,$ such that
$|D_vL|$, $|D^2_{xv}L|$, $|D^2_{xx}L|$, $|D^2_{vv}L|\leq (c_2 L+c_3)E(|X|^q+|Z|^q+1)$,
and
$|D_xL|\leq c_2 L+c_3$.
\item
\label{solvb}
For any $X,Y,P\in L^q(\Omega)$, the equation $Z=-D_pH( X, P,Y,Z)$ can be solved with respect to $Z$ as
$$
Z=G( X, P,Y).
$$
Moreover, the map $G\colon L^q(\Omega)\times L^q(\Omega)\times L^q(\Omega)\to L^q(\Omega)$ is Lipschitz.
\item
\label{Hcont}
$H$ is continuous in $X,Z$ locally uniformly in $x,p$.
\item
\label{dxHlip}
 $D_xH$ is Lipschitz in $\Rr^d\times\Rr^d\times L^q(\Omega)\times L^q(\Omega).$

\item
\label{hjuniqc}
For any $R>0$ there exists a constant $C(R)$ such that
 \[|H(x_1, p_1, X_1, Z_1)-H(x_2, p_2, X_2, Z_2)|\leq C(R)(|p_1-p_2|+|x_1-x_2|+
 W_q(\Ll(X_1, Z_1), \Ll(X_2,Z_2)).
 \]
 for $|p_i|,|x_i|,\|X_i\|_{L^q(\Omega)},\|Z_i\|_{L^q(\Omega)}\leq R,\ i=1,2.$ 
Here, $W_q$ denotes the $q$-Wasserstein metric on $\mathcal{P}(\Rr^d)$ (cf. \cite{Villanithebook}).
\end{enumerate}

From Assumption \ref{hjuniqc}, we have 
\[
|H(x_1, p_1, X_1, Z_1)-H(x_2, p_2, X_2, Z_2)|\leq C(R)(|p_1-p_2|+|x_1-x_2|+
 \|X_1-X_2\|_{L^q(\Omega)}+\|Z_1-Z_2\|_{L^q(\Omega)}). 
 \]

Two examples that satisfy the above conditions are:
\begin{equation}
\label{lagrangians}
L(x, v, X,Z)=\frac{|\beta E Z +v|^2}{2}-V(x, X)\text{ and }\, L(x, v, X,Z)=\frac{|v|^2}{2}+\beta vEZ-V(x, X),
\end{equation}
with $V\colon\Rr^d\times L^q(\Omega)\to\Rr^d$  bounded, $C^{\infty}$ in the $x$ variable  with derivative $D_xV$ Lipschitz in $X$. 
Quadratic Lagrangians are standard in optimal control. The extra dependence on $Z$  indicates that players prefer to move with a velocity close to $-\beta EZ$ (i.e., in the opposite direction of the average of the population). The corresponding Hamiltonians are
\begin{equation}\label{hamiltonians}
H( x, p, X, Z)=\frac{|p|^2}{2}+\beta p EZ+V(x, X)\text{ and } H( x, p, X, Z)=\frac{|\beta E Z +p|^2}{2}+V(x, X).
\end{equation}
Assumption \ref{solvb} can be checked explicitly in the examples above.
For both Lagrangians, the function $G$ is given by
$G(X,P,Y)=-P+\frac{\beta}{1+\beta}EP.$ We also note that by the inverse function theorem, suitably small perturbations of the Lagrangians in these examples also satisfy assumption \ref{solvb}.

For each Lipschitz continuous function $\Phi\in C^{0}(R^n)$, consider the system of ODEs in $L^q(\Omega)$
\begin{equation}
\label{hjfl}
\begin{cases}
\bdX=-D_pH(\bX, \bP, \bX,\bdX)\\
\bdP=D_xH(\bX, \bP, \bX,\bdX)\\
\bX(0)=X_0,\,\bP(0)=D_x\Phi(X_0).
 \end{cases}
\end{equation}
 Using Assumption \ref{solvb}, this system can be rewritten as
\begin{equation}
\label{hjfl2}
\begin{cases}
\dot{\bX}=G(\bX, \bP, \bX,)\\
\dot{\bP}=D_xH(\bX, \bP, \bX,G(\bX, \bP, \bX,))\\
\bX(0)=X_0,\,\bP(0)=D_x\Phi(X_0).
 \end{cases}
\end{equation}

Since $\Phi$ is Lipschitz, $D_x\Phi$ exists almost everywhere. Additionally, because $X_0$ has an absolutely continuous law, $\bP(0)$ is well defined on a set of full measure.
Assumptions \ref{dxHlip} and \ref{solvb} on $G$ and $D_xH$ imply that the right-hand sides of the equations in \eqref{hjfl2} are Lipschitz in $\bX$ and $\bP$.
Furthermore, it has uniform linear growth.  Consequently, a Picard-Lindelof-type argument ensures the existence and uniqueness of global solutions of \eqref{hjfl2} and hence to \eqref{hjfl}. 
Standard arguments imply that $\bX,\bP\in C^1([0,T];L^q(\Omega))$ and, additionally, $\bdX$ is Lipschitz in $t$, that is, $\bX\in C^{1,1}([0,T];L^q(\Omega))$.

\begin{lemma}
\label{c8}
Let $\Phi\in C^0(\Rr^d)$ be a bounded Lipschitz continuous function with Lipschitz constant $Lip(\Phi)\leq R$. Let $\bX\in C^{1,1}([0,T];L^q(\Omega))$ be the random variable determined by \eqref{hjfl}. Then there exists a constant $c_4=c_4(L,R,X_0,T,q)$ such that
\[
E|\bX(t)|^q+|\dot \bX(t)|^q\leq c_4,\quad 0\leq t\leq T.
\]
\end{lemma}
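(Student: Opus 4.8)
The plan is to derive an a priori bound on $E|\bX(t)|^q + |\dot\bX(t)|^q$ directly from the ODE system \eqref{hjfl}, using the Lipschitz/linear-growth structure of $G$ and $D_xH$ together with a Gr\"onwall argument. The key observation is that the right-hand side of \eqref{hjfl2} is globally Lipschitz in $(\bX,\bP)$ and has at most linear growth, so the solution cannot blow up and its $L^q(\Omega)$-norm is controlled on $[0,T]$ in terms of the initial data. The main subtlety is that $\bP(0)=D_x\Phi(X_0)$ is only bounded by $Lip(\Phi)\leq R$ rather than being small, so all constants must be allowed to depend on $R$; this is exactly the dependence recorded in $c_4=c_4(L,R,X_0,T,q)$.

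First I would bound $\|\bP(t)\|_{L^q(\Omega)}$. Since $\|\bP(0)\|_{L^q(\Omega)}\leq R$ (because $|D_x\Phi|\leq Lip(\Phi)\leq R$ a.e., using that $X_0$ has an absolutely continuous law so $\bP(0)$ is defined a.e.), and since $\dot{\bP}=D_xH(\bX,\bP,\bX,G(\bX,\bP,\bX))$ with $D_xH$ Lipschitz by Assumption \ref{dxHlip} and $G$ Lipschitz by Assumption \ref{solvb}, the composite vector field defining $\dot{\bP}$ has linear growth in $(\bX,\bP)$. Next I would bound $\|\bX(t)\|_{L^q(\Omega)}$: from $\dot{\bX}=G(\bX,\bP,\bX)$ and the Lipschitz property of $G$, one gets $\|\dot{\bX}(t)\|_{L^q(\Omega)}\leq C(1+\|\bX(t)\|_{L^q(\Omega)}+\|\bP(t)\|_{L^q(\Omega)})$, where the constant absorbs $\|G(0,0,0)\|_{L^q(\Omega)}$.

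The core step is then to set $m(t)=\|\bX(t)\|_{L^q(\Omega)}+\|\bP(t)\|_{L^q(\Omega)}$ and combine the two estimates into a single differential inequality $\tfrac{d}{dt}m(t)\leq C(1+m(t))$ for a constant $C$ depending only on the Lipschitz constants of $G$ and $D_xH$. Integrating via Gr\"onwall yields
\[
m(t)\leq \big(m(0)+Ct\big)e^{Ct}\leq \big(\|X_0\|_{L^q(\Omega)}+R+CT\big)e^{CT},
\]
which is the desired $L^q(\Omega)$ bound on $\bX(t)$ uniformly in $t\in[0,T]$; note $m(0)\leq \|X_0\|_{L^q(\Omega)}+R$. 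Feeding this back into $\|\dot{\bX}(t)\|_{L^q(\Omega)}\leq C(1+m(t))$ controls $\dot\bX$ as well. Since $E|\bX(t)|^q=\|\bX(t)\|_{L^q(\Omega)}^q$ and similarly for $\dot\bX$, raising the bounds to the $q$-th power and adding produces a constant $c_4$ with the stated dependence on $L,R,X_0,T,q$.

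I expect the main obstacle to be purely bookkeeping: verifying that the composite right-hand side of \eqref{hjfl2} genuinely has the claimed linear growth when $\bX$ appears both as a state variable and inside the last law-dependent slot of $D_xH$ and $G$. Once one checks that the Lipschitz estimates of Assumptions \ref{dxHlip} and \ref{solvb} apply with $\bX$ substituted into that diagonal slot (so that, e.g., $\|G(\bX,\bP,\bX)\|_{L^q(\Omega)}\leq \|G(0,0,0)\|_{L^q(\Omega)}+C(\|\bX\|_{L^q(\Omega)}+\|\bP\|_{L^q(\Omega)})$), the Gr\"onwall argument is routine. No use of the optimal control problem \eqref{utild} or of $\tilde u$ is needed for this lemma; it is an estimate on the characteristic flow alone.
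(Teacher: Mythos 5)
Your proposal is correct and follows essentially the same route as the paper: both arguments use the Lipschitz bounds on $G$ and $D_xH$ from Assumptions \ref{solvb} and \ref{dxHlip} to get a linear-growth differential inequality for a combined $L^q(\Omega)$-norm of $(\bX,\bP)$, apply Gr\"onwall, bound the initial data via $E|X_0|^q$ and $Lip(\Phi)\leq R$, and then feed the state bound back into the equation to control $\dot\bX$. The only difference is cosmetic: the paper works with the single norm $\left[E(|\bX|^q+|\bP|^q)\right]^{1/q}$ of the pair $\bR=(\bX,\bP)$ rather than your sum of norms $m(t)$, which are equivalent.
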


\begin{proof}
From \eqref{hjfl2}, since $G\text{ and }D_xH$ are Lipschitz, we have
\begin{equation}
\label{ineq}
\begin{cases}
E|\dot{\bX}|^q\leq CE(|\bX|^q+|\bP|^q)+C\\
E|\dot{\bP}|^q\leq CE(|\bX|^q+|\bP|^q)+C.
\end{cases}
\end{equation}
Combining the two estimates in \eqref{ineq} yields
\[
E(|\dot \bX|^q+|\dot \bP|^q)\leq CE(|\bX|^q+|\bP|^q)+C,
\]
where $C$ depends on only $G$ and $D_xH$.
Let $\bR=(\bX,\bP)$ and $\|\bR\|=\left[E(| \bX|^q+| \bP|^q)\right]^{\frac{1}{q}}$. The inequality above gives $\|\dot \bR\|\leq C+C\|\bR\|$.
From Gronwall's inequality, we have
\[
\bR(t)\leq C(T)(1+\|\bR(0)\|),\quad\forall t\in[0,T].
\]
Therefore,
\[
E(|\bX|^q+|\bP|^q)\leq C(T)(1+E(|X_0|^q+|D\Phi(X_0)|^q))\leq C(T)(1+E|X_0|^q+|Lip(\Phi)|^q).
\]
The previous estimate together with \eqref{ineq} yields the required result.
\end{proof}

Next,  suppose $(\bX,\bP)$ is a solution of \eqref{hjfl}. We define $\tilde{u}(x,t)$ to be the solution to the optimal control problem
\begin{equation}
\label{utild}
\tilde{u}(x,t)=\inf\limits_{\bx}\int\limits_t^TL(\bx,\dot{\bx},\bX,\dot{\bX})+\psi(\bx(T),\bX(T)),
\end{equation}
where the infimum is taken over all absolutely continuous trajectories $\bx(s)$, with $\bx(t)=x$.

\begin{lemma}
\label{blsmc}
Let $\Phi\in C(\Rr^d)$ be any bounded, Lipschitz function and let $(\bX,\bP)$ be a solution of \eqref{hjfl}. Then,
$\tilde{u}(x,t)$ defined by \eqref{utild} is uniformly bounded and Lipschitz in $x$. Furthermore, for any $t<T$, $\tilde{u}$ is semiconcave in $x$. More specifically, there exists constants $c_5$ and $c_6$, such that $c_5$ depends only on $L, \psi$, $T$ and $c_6$ depends only on $L$, $Lip(\Phi)$, $T$ and $T-t_1$, and
\begin{enumerate}
\item $\tilde{u}\leq c_1(T-t)+\|\psi\|_{\infty} \text{ for all } x\in \Rr^d, 0\leq t\leq T.$
\item $|\tilde{u}(x+y,t)-\tilde u(x,t)|\leq  c_5|y|\text{ for all } x,y\in \Rr^d, 0\leq t\leq T.$
\item $\tilde{u}(x+y,t)+\tilde u(x-y,t)-2\tilde u(x,t)\leq c_6|y|^2\text{ for all } x,y\in \Rr^d,\quad 0\leq t\leq t_1<T$
\end{enumerate}

\end{lemma}

\begin{remark}
	The above lemma is classical, and similar results can be found in \cite{Bardi} (Theorem 4.9, p. 69) or \cite{FS} (c.f. the discussion in IV.9, p. 186).
	For our purposes, we need to ensure the uniformity of the constants in the data of the problem, namely the explicit dependence on the norms of $\bX$ and $\bdX$ is essential to use a fixed point theorem.
	Hence, we present here a detailed proof in the Appendix.
\end{remark}

Let $\overline{c_5}=c_5,\overline{c_6}=c_6$ be the constants from Lemma \ref{blsmc} for $t_1=0$, and
\[
\overline{c_7}=\max\{\,Tc_0(c_4(L,c_5,X_0,T,q)+1)+\|\psi\|_{\infty},Tc_1+\|\psi\|_{\infty}\,\}.
\]
We denote by $\mathcal{A}$ the set of functions $\Phi\in C(\Rr^d)$, with $|\Phi|\leq \overline{c_7},$ $Lip(\Phi)\leq \overline{c_5}$ and $\Phi$ semiconcave with the constant $\overline{c_6}.$
\begin{lemma}
\label{Fcont}
The mapping
$$
F\colon \Phi(\cdot)\longmapsto \tilde{u}(\cdot,0)
$$
is a continuous compact mapping  from $\mathcal{A}$ into itself (with respect to the topology of locally uniform convergence).
\end{lemma}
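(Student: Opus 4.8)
The plan is to establish three properties of $F$ separately: that it maps $\mathcal{A}$ into itself, that it is compact, and that it is continuous; only the last is substantial. That $F(\Phi)\in\mathcal{A}$ for every $\Phi\in\mathcal{A}$ is a restatement of Lemmas \ref{c8} and \ref{blsmc} evaluated at $t=0$ and $t_1=0$. Indeed, the bound $|\tilde u(\cdot,0)|\leq\overline{c_7}$ follows from claim (1) of Lemma \ref{blsmc} together with the lower bound $L\geq -c_0E(|\bX|^q+|\dot\bX|^q+1)$ and the a priori estimate $E(|\bX|^q+|\dot\bX|^q)\leq c_4$ of Lemma \ref{c8}; the Lipschitz bound $Lip(\tilde u(\cdot,0))\leq\overline{c_5}$ is claim (2); and semiconcavity with constant $\overline{c_6}$ is claim (3) for $t_1=0$, the dependence of $c_6$ on $Lip(\Phi)$ being absorbed by the uniform bound $Lip(\Phi)\leq\overline{c_5}$ valid on $\mathcal{A}$. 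Compactness is then immediate: the functions in $F(\mathcal{A})\subseteq\mathcal{A}$ are uniformly bounded and equi-Lipschitz, hence equicontinuous, so by the Arzel\`a--Ascoli theorem $\mathcal{A}$ is precompact in the topology of locally uniform convergence and $F$ takes values in this precompact set.

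The core of the argument is continuity. Suppose $\Phi_n\to\Phi$ locally uniformly with all $\Phi_n,\Phi\in\mathcal{A}$, let $(\bX_n,\bP_n)$ and $(\bX,\bP)$ be the corresponding solutions of \eqref{hjfl}, and let $\tilde u_n,\tilde u$ be the associated value functions \eqref{utild}. Since $\Phi$ influences $\tilde u$ only through the initial datum $\bP(0)=D_x\Phi(X_0)$, the first and decisive step is to prove $\bP_n(0)\to\bP(0)$ in $L^q(\Omega)$. I would exploit the fact that the elements of $\mathcal{A}$ are uniformly semiconcave (constant $\overline{c_6}$) and uniformly Lipschitz: for such a family, locally uniform convergence $\Phi_n\to\Phi$ forces $D_x\Phi_n(x)\to D_x\Phi(x)$ at every point $x$ of differentiability of $\Phi$ --- the standard gradient-convergence property of semiconcave functions, proved as for concave functions. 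As $\Phi$ is semiconcave it is differentiable off a set of Lebesgue measure zero, and since $\Ll(X_0)$ is absolutely continuous by Assumption \ref{x0}, this null set has probability zero under the law of $X_0$; hence $D_x\Phi_n(X_0)\to D_x\Phi(X_0)$ almost surely. The uniform bound $|D_x\Phi_n|\leq\overline{c_5}$ then licenses dominated convergence and yields $\bP_n(0)\to\bP(0)$ in $L^q(\Omega)$.

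Granted the convergence of the initial data, continuity of $F$ follows from stability of the flow and of the control problem. Because $G$ and $D_xH$ are Lipschitz (Assumptions \ref{solvb} and \ref{dxHlip}), the right-hand side of \eqref{hjfl2} is Lipschitz with uniform linear growth, so Gronwall's inequality gives $(\bX_n,\bP_n)\to(\bX,\bP)$ in $C([0,T];L^q(\Omega))$ and, through $\dot\bX=G(\bX,\bP,\bX)$, also $\dot\bX_n\to\dot\bX$. Consequently the joint laws $\Ll(\bX_n(s),\dot\bX_n(s))$ converge to $\Ll(\bX(s),\dot\bX(s))$ in $W_q$, uniformly in $s\in[0,T]$. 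Inserting this into \eqref{utild} by comparing near-optimal trajectories and invoking the modulus of continuity of $L$ and $\psi$ in their measure arguments --- equivalently, the Hamilton--Jacobi stability estimate provided by Assumption \ref{hjuniqc} --- one concludes that $\tilde u_n(\cdot,0)\to\tilde u(\cdot,0)$ locally uniformly, which is the desired continuity.

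The step I expect to be the main obstacle is the one in the second paragraph: upgrading locally uniform convergence of the $\Phi_n$ to $L^q$-convergence of their gradients evaluated at $X_0$. This is exactly where the uniform semiconcavity built into $\mathcal{A}$ and the absolute continuity of the law of $X_0$ (Assumption \ref{x0}) are used in an essential way --- without either, the gradients $D_x\Phi_n(X_0)$ need not converge. Everything downstream of this point is a routine Gronwall-and-stability computation.
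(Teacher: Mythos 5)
Your proof is correct, and its core coincides with the paper's own argument: the self-mapping property is the same combination of Lemma \ref{c8} with Lemma \ref{blsmc}, and your decisive step --- uniform semiconcavity upgrades locally uniform convergence $\Phi_n\to\Phi$ to $D_x\Phi_n\to D_x\Phi$ at a.e.\ point, whence by absolute continuity of $\Ll(X_0)$ and dominated convergence $\bP_n(0)\to\bP(0)$ in $L^q(\Omega)$, and then Gronwall gives $(\bX_n,\bP_n)\to(\bX,\bP)$ and $\bdX_n\to\bdX$ --- is exactly the paper's chain of estimates. You differ only in how the conclusion is drawn. The paper argues by contradiction: assuming $F(\Phi_n)\nrightarrow F(\Phi)$, it uses compactness of $\mathcal{A}$ to extract a locally uniform limit $\bar\Phi$ of the $\tilde u_n$, invokes stability of viscosity solutions to show $\bar\Phi$ solves the limit Hamilton--Jacobi equation \eqref{hjt}, and then identifies $\bar\Phi=F(\Phi)$ by uniqueness of viscosity solutions, whose hypothesis (Lipschitz dependence of $\tilde H$ on $x,p,t$) is derived from Assumption \ref{hjuniqc} together with the Lipschitz regularity of $\bX,\bdX$ in $t$. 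You instead conclude directly, converting $W_q$-convergence of the joint laws into locally uniform convergence $\tilde u_n(\cdot,0)\to\tilde u(\cdot,0)$ by comparing near-optimal trajectories or by an HJ stability estimate. That route is legitimate and has the merit of avoiding subsequence extraction, but it is the one place where your write-up is looser than what it replaces: Assumption \ref{hjuniqc} is a modulus for $H$, local in $(x,p)$, so your direct estimate requires either transferring that modulus to $L$ by Legendre duality (using that optimal velocities are bounded, by coercivity together with the uniform Lipschitz bound on the $\tilde u_n$) or a localized comparison theorem exploiting the bounded gradients of the solutions; the paper's contradiction scheme sidesteps this quantitative work by quoting only soft stability and uniqueness results from \cite{FS}. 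Since both arguments lean on Assumption \ref{hjuniqc} at exactly the same point, the difference is one of packaging rather than substance: yours is more direct and quantitative, the paper's is softer but uses the compactness of $\mathcal{A}$ a second time inside the continuity proof.
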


\begin{proof}
First,
we show that $F$ maps the set $\mathcal{A}$ into itself. According to the estimate from Lemma \ref{c8}, we get
\begin{equation*}
\begin{split}
&\tilde{u}(x,0)=\inf\limits_{\bx}\int\limits_0^TL(\bx,\dot{\bx},\bX,\bdX)+\psi(\bx(T),\bX(T)\geq -c_0\int\limits_0^TE(|\bX|^q+|\dot \bX|^q+1)-\|\psi\|_{\infty}\\&\geq -Tc_0\left(c_4(L,c_5,X_0,T,q)+1\right)-\|\psi\|_{\infty}\geq-\overline{c_7}.
\end{split}
\end{equation*}
The previous identity, combined with Lemma \ref{blsmc}, implies that $\tilde u\in \mathcal{A}$.
To prove the continuity of the mapping $F$ we argue by contradiction. Suppose there exists $\Phi_n\to \Phi$ in $C(\Rr^d)$ such that $F(\Phi_n)\nrightarrow F(\Phi)$ in $C(\Rr^d)$. Then, since $F(\Phi_n)\in \mathcal{A}$ and $\mathcal{A}$ is compact, we can assume, without loss of generality, that $\tilde{u}_n=F(\Phi_n)\to\bar{\Phi}\neq F(\Phi)$, locally uniformly. Because $\Phi_n$ are uniformly semiconcave, we can assume that $D\Phi_n\to D\Phi$ almost everywhere. We have that the corresponding trajectories $(\bX_n,\bP_n)$ solve
\begin{equation*}
\begin{cases}
 \dot{\bX}_n=G(\bX_n, \bP_n, \bX_n)\\
\dot{\bP}_n=D_xH(\bX_n,\bP_n, \bX_n, G(\bX_n, \bP_n, \bX_n))\\
\bX_n(0)=X_0,\,\bP_n(0)=D_xu_n(X_0).
 \end{cases}
\end{equation*}
By Gronwall's inequality,
\begin{equation*}
\begin{split}
&E(|\bX_n(t)-\bX(t)|^q+|\bP_n(t)-\bP(t)|^q)\leq C (E|\bX_n(0)-\bX(0)|^q+|\bP_n(0)-\bP(0)|^q)=\\&C E|D\Phi_n(X_0)-D\Phi(X_0)|^q.
\end{split}
\end{equation*}
The dominated convergence theorem entails that the right-hand side of the preceding expression converges to zero. Consequently,  $\bX_n\to \bX$ and $\bP_n\to \bP$ in $L^{\infty}([0,T];L^q(\Omega))$. Equation \eqref{hjfl2} implies that $\dot{\bX}_n\to\dot{\bX}$. Thus, using Assumptions
\ref{Hcont} and \ref{psi},
$$
H( x, p,\bX_n, \dot{\bX}_n)\to H(p, x,\bX, \dot{\bX}) \text{ locally uniformly in } x,p,
$$
and
$$
\psi(x,\bX_n(T))\to\psi(x,\bX(T))\text{ locally uniformly in } x.
$$
Because $\tilde{u}_n\to\bar{\Phi}$ locally uniformly, the stability of viscosity solutions (cf \cite{FS}) implies that $\bar{\Phi}$ is a viscosity solution of the Hamilton-Jacobi equation
\begin{equation}
\label{hjt}
\begin{cases}
-\bar{u}_t(x,t)+\tilde{H}(x, D_x\bar{u}(x,t), t)=0,\\
\bar{u}(x,T)=\psi(x,\bX(T)),
\end{cases}
\end{equation}
with Hamiltonian
\[
\tilde{H}( x, p,t)=H( x, p,\bX(t), \bdX(t)).
\]

On the other hand, the definition of $\tilde{u}=F(\Phi)$ implies that $\tilde{u}$  is a viscosity solution of \eqref{hjt}.

Since $\bX$ and $\dot \bX$ are Lipschitz continuous in $t$, Assumption \ref{hjuniqc} implies
 \[|\tilde{H}( x, p,t)-\tilde{H}(y, p', s)|\leq C(R)(|p-p'|+|x-y|+|t-s|),\]
for any $R>0$ and $t,s\in[0,T]$ and all $x,y,p,q \in\Rr^d$ with $|p|,|p'|\leq R$. This condition
gives the uniqueness of the viscosity solutions to \eqref{hjt}(\cite{FS}), thus $\bar{\Phi}=\tilde{u}$. This establishes the contradiction. Hence the mapping $F$ is continuous. Finally, the compactness of $\mathcal{A}$ implies that $F$ is compact.
\end{proof}

\begin{proof}[Proof of Theorem \ref{teoexist}]
The set $C(\Rr^d)$, endowed with the topology of locally uniform convergence, is a topological vector space. Moreover, $\mathcal{A}$ is a compact convex subset. Thus, by Lemma \ref{Fcont} and Schauder's fixed-point theorem, there exists $\Phi\in C(\Rr^d)$ such that
$$\Phi(\cdot)=F(\Phi)=\tilde{u}(\cdot,0),$$
where $\tilde{u}(x,t)$ is defined as in \eqref{utild}.
Let $u(x,t):=\tilde{u}(x,t).$ Then, $u$ solves the Hamilton-Jacobi equation
\begin{equation}
\label{Hj}
\begin{cases}
-u_t(x,t)+H( x, D_xu(x,t),\bX(t),\dot{\bX}(t))=0\\
u(x,T)=\psi(x,\bX(T)).
\end{cases}
\end{equation}
From standard results in optimal control theory (see \cite{FS}), for almost every $x,$ there exists an optimal trajectory given by the Hamiltonian flow
\begin{equation}
\label{Hjfl}
\begin{cases}
\dot{\bf x}(x,t)=-D_pH({\bf x}, {\bf p}, \bX, \dot{\bX})\\
\dot{\bf p}(x,t)=D_xH({\bf x}, {\bf p}, \bX, \dot{\bX})\\
{\bf x}(x,0)=x,\,{\bf p}(x,0)=D_xu(x,0).
\end{cases}
\end{equation}
We also know that ${\bf p}(x,t)=Du(\bx(x,t),t)$ and that $Du$ exists at all points $({\bf x}(x,t),t)$ with $t>0.$ Set $\bY(t)={\bf x}(X_0,t)$ and $\bQ(t)={\bf p}(X_0,t)$. Then, by \eqref{Hjfl}, we have
\begin{equation}
\begin{cases}
\label{hjf}
\dot{\bY}=-D_pH(\bY, \bQ, \bX, \dot{\bX})\\
\dot{\bQ}=D_xH(\bY, \bQ, \bX, \dot{\bX})\\
\bY(0)=X_0,\,\bQ(0)=D_xu(X_0).
\end{cases}
\end{equation}
Since $D_pH,D_xH$ are Lipschitz in $p,x$, the uniqueness of solutions to the system of ordinary differential equations in $L^q(\Omega)$,\eqref{hjf}, yields $\bX(t)=\bY(t)$ and $\bP(t)=\bQ(t)=\bp(X_0,t)=Du(\bY(t),t)$, for all $t\in[0,T]$.
In this way
\begin{equation*}
\label{Tsp}
\begin{cases}
\dot{\bX}(t)=-D_pH(\bX(t), Du(\bX(t),t),\bX(t), \dot{\bX}(t))\\
\bX(0)=X_0.
\end{cases}
\end{equation*}
\end{proof}

\begin{remark}
\label{rmk2}
This existence proof does not require
the absolute continuity of the law of $\bX$,
only the absolute continuity of the law of the initial condition $X_0$.
Accordingly, the preceding theorem extends the results
in \cite{cardaliaguet}.
\end{remark}

\section{Absolute continuity of the law of trajectories}

In the present section, we give conditions under which solutions to extended mean-field games have an absolutely continuous law. Our techniques are related to the ones in
\cite{cardaliaguet}, where the case of quadratic Hamiltonians was discussed.

Consider a Hamiltonian $H^\sharp\colon \Rr^d\times\Rr^d\times[0,T]\to\Rr$ and the corresponding Lagrangian $L^\sharp$ satisfying the following conditions:
\begin{enumerate}
\item\label{lsharp1}
$L^{\sharp}$ is continuous in all variables and there exist constants $c_0,c_1>0$ and $v_0\in\Rr^d$ such that
$L^\sharp(x, v, t)\geq -c_0$, and
$L^\sharp(x, v_0, t)\leq c_1$.
\item\label{lsharp2}
$L^\sharp$ is coercive in $v$:
$$
\lim_{|v|\to\infty}\frac{L^\sharp(x, v, t)}{|v|}=\infty,
$$
uniformly in $x$.
\item\label{lsharp3}
There exists a constant $C>0,$ such that
$|D_xL^\sharp|,|D_vL^\sharp|,|D^2_{vv}L^\sharp|,|D^2_{vx}L^\sharp|,|D^2_{xx}L^\sharp|\leq CL^\sharp+C$.
\item\label{lsharp4}
$H^\sharp$ is twice differentiable in $p,x$ with bounded derivatives:
$|D^2_{pp}H^\sharp|, |D^2_{px}H^\sharp|\leq C.$
\item\label{lsharp5}
$ D^2_{pp}H^\sharp( x, p,t)$ is uniformly Lipschitz in $ x, p,t$.
\end{enumerate}

To obtain the absolute continuity of the law
for extended mean-field games,
we apply the next theorem to
\[
L^\sharp(x,v,t)=L(x, v, \bX(t), \bdX(t))
\]
and
\[
H^\sharp( x, p,t)=H( x, p,\bX(t), \bdX(t)).
\]
Assumptions \ref{lsharp1}-\ref{lsharp3} are implied by
the hypotheses in the previous section for any solution $\bX$  to \eqref{mfge}. The last two, however, do not follow from the ones in the prior section.  Nevertheless, for $\bX\in C^{1,1}([0,T];L^q(\Omega))$ and the Lagrangians
\eqref{lagrangians} and Hamiltonians \eqref{hamiltonians}, the corresponding Lagrangians and Hamiltonians, $L^\sharp$ and $H^\sharp$, satisfy \ref{lsharp1}-\ref{lsharp5}.

\begin{teo}
\label{abscont} Assume Assumptions \ref{lsharp1}-\ref{lsharp5} hold. Let $u$ be a globally Lipschitz viscosity solution of the Hamilton-Jacobi equation:
\begin{equation}\label{hsharp}
\begin{cases}
-u_t+H^\sharp(x, Du, t)=0\\
u(x,T)=\psi(x),
\end{cases}
\end{equation}
where $\psi:\Rr^d\to\Rr$ is bounded and Lipschitz.
Suppose $X_0\in L^q(\Omega)$ has an absolutely continuous law with respect to the Lebesgue measure. 
Let $\bX\in C^1([0,T],L^q(\Omega))$ solve
\begin{equation}
\label{feedback}
\begin{cases}
\bdX(t)=-D_pH^\sharp( \bX, Du(\bX,t), t)\\
\bX=X_0.
\end{cases}
\end{equation}
Then, under the above conditions on $H^{\sharp}$, for every
$t<T$, $\bX(t)$ has an absolutely continuous law with respect to the Lebesgue measure.
\end{teo}

{\sc Remark:}
As above, \eqref{feedback} has a well-defined solution
since $X_0$ has an absolutely continuous law. The key point is the absolute continuity
of the law of $\bX$.

Before proving the theorem, we establish an auxiliary lemma:

\begin{lemma}
\label{zinvlip}
Let $\bz,\bq\in C^1([0,T];\Rr^d)$ and $A,B\in C([0,T];\Rr^{d\times d})$ be such that
\[
\bdz (t)=-A(t)\bq (t)-B(t)\bz (t).
\]
Suppose further there exist constants $C,\theta>0$ such that
\begin{equation}
\label{bounds}
\begin{split}
&|\bz(t)|,|\bq(t)|\leq C,\quad
\bz(t)\cdot\bq(t)\leq C|\bz(t)|^2,\\
&|A(t)|,|B(t)|\leq C,\quad
A(t)\geq \theta I,\\
&|A(t_2)-A(t_1)|\leq C|t_2-t_1|.
\end{split}
\end{equation}
Then, there exists a constant $C_1>0$, which only depends on $C,\theta$ and $T$, such that
\[
|\bz(t)|\geq C_1|\bz(0)|,\quad \forall t\in [0,T].
\]
\end{lemma}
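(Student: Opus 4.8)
The plan is to control a weighted norm of $\bz$ rather than $|\bz|^2$ itself. The naive attempt of differentiating $|\bz|^2$ gives $\frac{d}{dt}|\bz|^2 = -2\bz\cdot A\bq - 2\bz\cdot B\bz$; the term $\bz\cdot B\bz$ is harmless since $|\bz\cdot B\bz|\le C|\bz|^2$, but the term $\bz\cdot A\bq$ cannot be bounded by $|\bz|^2$ from the hypotheses alone, because $A\bq$ may be large and badly aligned with $\bz$. The only link between $\bz$ and $\bq$ is the semiconcavity-type inequality $\bz\cdot\bq\le C|\bz|^2$, and to exploit it I would instead study the Lyapunov function
\[
\psi(t) = \bz(t)\cdot A(t)^{-1}\bz(t).
\]

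First I would record that $A(t)\ge\theta I$ together with $|A(t)|\le C$ (and symmetry of $A$, which holds since in the application $A=D^2_{pp}H^\sharp$) gives $\theta I\le A(t)\le CI$, hence $A(t)$ is invertible with $\frac1C I\le A(t)^{-1}\le\frac1\theta I$. In particular $\psi$ is comparable to $|\bz|^2$, namely $\frac1C|\bz|^2\le\psi\le\frac1\theta|\bz|^2$. Next, solving the differential equation for $\bq$, which is legitimate precisely because $A$ is invertible, yields $\bq=-A^{-1}\bdz-A^{-1}B\bz$; substituting this into $\bz\cdot\bq\le C|\bz|^2$ and using $|A^{-1}B|\le C/\theta$ produces the lower bound $\bz\cdot A^{-1}\bdz\ge-(C+C/\theta)|\bz|^2$.

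Then I would differentiate $\psi$. Since $A$ is only Lipschitz, $A^{-1}$ is Lipschitz as well, so $\psi$ is absolutely continuous and, for almost every $t$, $\dot\psi=2\bz\cdot A^{-1}\bdz+\bz\cdot\frac{d}{dt}(A^{-1})\bz$, where symmetry of $A^{-1}$ has been used. The first term is bounded below by $-2(C+C/\theta)|\bz|^2$ by the previous step, while the second is bounded below by $-\frac{C}{\theta^2}|\bz|^2$, because $\frac{d}{dt}A^{-1}=-A^{-1}\dot A A^{-1}$ with $|\dot A|\le C$ almost everywhere and $|A^{-1}|\le1/\theta$. Using $|\bz|^2\le C\psi$, these combine to $\dot\psi\ge-C''\psi$ for a constant $C''$ depending only on $C$ and $\theta$. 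Gronwall's inequality then gives $\psi(t)\ge e^{-C''T}\psi(0)$, and translating back through the comparison between $\psi$ and $|\bz|^2$ yields $|\bz(t)|^2\ge\frac{\theta}{C}e^{-C''T}|\bz(0)|^2$, which is the claim with $C_1=(\theta/C)^{1/2}e^{-C''T/2}$, a constant depending only on $C,\theta,T$.

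The main obstacle is exactly the term $\bz\cdot A\bq$: its whole difficulty is that it is uncontrollable in a naive $|\bz|^2$ estimate, and the device of weighting by $A^{-1}$ is what converts it, via the differential equation, into the quantity $\bz\cdot A^{-1}\bdz$ that the hypothesis $\bz\cdot\bq\le C|\bz|^2$ governs. A secondary technical point, worth flagging, is that $A$ is merely Lipschitz, so the argument must proceed through absolutely continuous functions and almost-everywhere derivatives rather than classical $C^1$ calculus; this is precisely where the Lipschitz bound $|A(t_2)-A(t_1)|\le C|t_2-t_1|$ is needed, to keep $\frac{d}{dt}A^{-1}$ bounded. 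I note that the homogeneity of the argument means the pointwise bounds $|\bz|,|\bq|\le C$ are not actually required for this lemma.
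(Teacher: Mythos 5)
Your proof is correct, and it rests on the same key device as the paper's: the weighted quantity $\bz\cdot A^{-1}\bz$, the use of the ODE to convert the hypothesis $\bz\cdot\bq\le C|\bz|^2$ into a lower bound on $\bz\cdot A^{-1}\bdz$, and Gronwall's inequality. The genuine difference is how the two arguments cope with $A$ being merely Lipschitz. The paper regularizes: it chooses $C^1$ matrices $A_n\to A$ uniformly with $A_n\ge\theta I$, $\|A_n\|\le C$, $\|\tfrac{d}{dt}A_n\|\le C$, solves the auxiliary ODE $\bdz_n=-A_n\bq-B\bz_n$ with $\bz_n(0)=\bz(0)$, runs the Lyapunov--Gronwall computation for this smooth data (so that $\tfrac{d}{dt}A_n^{-1}$ exists classically), and then passes to the limit, absorbing error terms of size $\epsilon_n=\sup_t|\bz_n(t)-\bz(t)|$. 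You instead differentiate $\psi=\bz\cdot A^{-1}\bz$ almost everywhere, which is legitimate because $A^{-1}$ is again Lipschitz, and you apply Gronwall to an absolutely continuous function satisfying an a.e.\ differential inequality. Your route is shorter (no auxiliary ODE, no convergence step) and slightly more general: the paper's proof needs the pointwise bound $|\bq|\le C$ to control the approximation error via $-(\bz_n-\bz)\cdot\bq\ge-C\epsilon_n$, whereas, as you correctly observe, your argument uses neither $|\bz|\le C$ nor $|\bq|\le C$. Finally, you are right to flag symmetry: the lemma as stated does not assume $A$ symmetric, yet both proofs use $\bdz\cdot A^{-1}\bz=\bz\cdot A^{-1}\bdz$ (the paper does so tacitly when differentiating $\bz_n\cdot A_n^{-1}\bz_n$); this is harmless because in the application $A$ is an average of the Hessians $D^2_{pp}H^\sharp$, and your explicit acknowledgment of this point is a small gain in rigor over the paper.
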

\begin{proof}
Since $A$ is Lipschitz in $t$ with Lipschitz constant $C,$ there exist matrices $A_n\in C^1([0,T];\Rr^d)$ with $A_n\geq \theta I$, $\|A_n\|\leq C$ and $\|\frac{d}{dt}A_n(t)\|\leq C$ such that $A_n\to A$ uniformly in $[0,T]$. Let $\bz_n\in C^1([0,T];\Rr^d)$ solve the equation
\begin{equation}
\label{eqforz_n}
\begin{cases}
\bdz_n (t)=-A_n(t)\bq (t)-B(t)\bz_n (t),\\
\bz_n(0)=\bz(0).
\end{cases}
\end{equation}
By standard ODE arguments, we have $|\bz_n-\bz|\leq \epsilon_n$, where $\epsilon_n\to 0$.

Since $A_n\geq \theta I$, $\|A_n\|\leq C$, $\|\frac{d}{dt}A_n(t)\|\leq C$, there exist constants $\theta',C'>0$ such that $A^{-1}_n\geq \theta' I$, $\|A^{-1}_n\|\leq C'$
and $\|\frac{d}{dt}A^{-1}_n(t)\|\leq C'$. In this way, from \eqref{eqforz_n} one gets
\begin{equation}
\begin{split}
&\bz_n\cdot A_n^{-1}\bdz_n=-\bz_n\cdot\bq-\bz_nA^{-1}_nB\bz_n=-(\bz_n-\bz)\cdot\bq-\bz\cdot\bq-\bz_n\cdot A^{-1}_nB\bz_n\geq\\&
-(\bz_n-\bz)\cdot\bq-C|\bz|^2-\bz_n\cdot A^{-1}_n B\bz_n\geq -C\epsilon_n-C|\bz_n|^2.
\end{split}
\end{equation}
Therefore,
\[
\frac{d}{dt}(\bz_n\cdot A_n^{-1}\bz_n)\geq -C\epsilon_n-C|\bz_n|^2-\bz_n\cdot (\frac{d}{dt}A_n^{-1})\bz_n\geq
-C\epsilon_n-C|\bz_n|^2 -\frac{C}{\theta'}\bz_n\cdot A_n^{-1}\bz_n.
\]
Gronwall's inequality implies
\[
\bz_n(t)\cdot A_n^{-1}(t)\bz_n(t)\geq -C''\epsilon_n+C'''e^{-C'''t}\bz_n(0)\cdot A_n^{-1}(0)\bz_n(0).
\]
Passing to the limit when $n\to\infty$ yields
\[
\bz(t)\cdot A^{-1}(t)\bz(t)\geq C'''e^{-C'''t}\bz(0)\cdot A^{-1}(0)\bz(0).
\]
The previous estimate together with $A^{-1}_n\geq \theta' I$, $\|A^{-1}_n\|\leq C'$ implies
\[
|\bz(t)|\geq C_1|\bz(0)|,
\]
for some constant $C_1>0$.
\end{proof}

Now, we proceed to the proof of Theorem \ref{abscont}:
\begin{proof}
Due to the assumptions on $L^{\sharp}$ and $\psi$, the value function
\[
u^{\sharp}(x,t)=\inf_{\bx:\ \bx(t)=x}\int\limits_t^TL^\sharp(\bx, \bdx, s)ds+\psi(\bx(T)),
\]
is a viscosity solution of \eqref{hsharp}. Moreover, under the hypothesis on $L^{\sharp}$, arguing as in the proof of Lemma \ref{blsmc}, we have that $u^{\sharp}$ is uniformly bounded, Lipschitz, and semiconcave in any interval $[0,T'],\,T'<T$. Hence, it is the unique viscosity solution to \eqref{hsharp}, so $u=u^{\sharp}$.

The optimal trajectories starting at $(x,0)$ for every point of differentiability $x$ of $u$ are given by the Hamiltonian flow:
\begin{equation}
\label{hflow}
\begin{cases}
\bdx(x,t)=-D_pH^\sharp(\bx, \bp, t)\\
\bdp(x,t)=D_xH^\sharp(\bx, \bp, t)\\
\bx(x,0)=x,\,\bp(x,0)=Du(x,0).
\end{cases}
\end{equation}
Furthermore, along these trajectories, $(\bx(x,t),t),\,t>0$, $u$ is differentiable and $\bp(x,t)=Du(\bx(x,t),t)$.
Fix a compact set $K\subset\Rr^d$. Let $x,y\in K$ points for which the flow is defined. Set
$\bz(t)=\bx(x,t)-\bx(y,t)$ and $\bq(t)=\bp(x,t)-\bp(y,t)$. Then
\[
\bdz (t)=-A(t)\bq (t)-B(t)\bz (t),
\]
where
$$A(t)=\int_0^1D^2_{pp}H^\sharp(\tau\bx(x,t)+(1-\tau)\bx(y,t), \tau\bp(x,t)+(1-\tau)\bp(y,t), t)d\tau$$
and
$$B(t)=\int_0^1D^2_{px}H^\sharp(\tau\bx(x,t)+(1-\tau)\bx(y,t), \tau\bp(x,t)+(1-\tau)\bp(y,t),t)d\tau.$$
Equation \eqref{hflow} implies that there exists a constant $C_K$ such that
the trajectories $\bx(x,t),\bp(x,t)$ are $C_K$-Lipschitz in $t$ and $|\bz|,|\bq|\leq C_K$. The assumptions on $H^\sharp$ imply that $A(t)$ is Lipschitz with Lipschitz constant $C_K$, provided $C_K$ is large enough.
In addition, $A(t)\geq\theta I$.
Since $u$ is semiconcave in $x$ uniformly for $t\in[0,T']$, we have
\[
\bz(t)\cdot\bq (t)=(\bx(x,t)-\bx(y,t), Du(\bx(x,t),t)-Du(\bx(x,t),t))\leq C|\bx(x,t)-\bx(y,t)|^2=C|\bz(t)|^2.
\]
Thus $\bz$ and $\bq$ satisfy the conditions of Lemma \ref{zinvlip}. Hence  $|\bx(x,t)-\bx(y,t)|\geq C_K(T')|x-y|,\,t\in[0,T']$. Therefore, the mapping $x\mapsto\bx(x,t)$ is invertible on the set where it is defined. 
Moreover, for any compact $K\subset\Rr^d$, the inverse of the map $x\in K\to \bx(x,t)$ is Lipschitz.

Let $A\in\Rr^d$ be a set of Lebesgue measure zero. Set $B=(\bx)^{-1}(\cdot, t)(A)$. For any
compact $K\subset\Rr^d$, the inverse of the map $x\in K\to \bx(x,t)$ is Lipschitz. Hence, $B\cap K$ has Lebesgue measure zero. Therefore, $B$ has Lebesgue measure zero.
Since $\bx$ is defined a.e., we have that
$\bX(t)=\bx(X_0,t)$ a.s.. So $X_0=(\bx)^{-1}(\cdot, t)(\bX(t))$ a.s.. Consequently, $P(\bX(t)\in A)=P(X_0\in B)=0$. Accordingly, $\bX(t)$ has an absolutely continuous law.
\end{proof}

\begin{cor}
Assume that $H,\Psi, X_0$ satisfy Assumptions \ref{x0}-\ref{DL}. Further suppose that
\begin{enumerate}
\item
$H$ is twice differentiable in $p,x$ with bounded derivatives:
$|D^2_{pp}H|, |D^2_{px}H|\leq C(X,Z).$
\item
$ D^2_{pp}H( x, p,X, Z)$ is uniformly Lipschitz in $ x, p,X,Z.$
\end{enumerate}
Then for any solution $(u,\bX)$ of \eqref{mfge}(in the sense of Theorem \ref{teoexist}),  $\bX(t),\ t<T,$ has an absolutely continuous law.
\end{cor}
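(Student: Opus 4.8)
The plan is to reduce the corollary to a direct application of Theorem \ref{abscont}, by ``freezing'' the population trajectory in the data of an auxiliary deterministic control problem. Concretely, I would fix a solution $(u,\bX)$ of \eqref{mfge} furnished by Theorem \ref{teoexist} and set
\[
L^\sharp(x,v,t)=L(x,v,\bX(t),\bdX(t)),\qquad H^\sharp(x,p,t)=H(x,p,\bX(t),\bdX(t)).
\]
By Theorem \ref{teoexist}, $u$ is a viscosity solution of $-u_t+H^\sharp(x,Du,t)=0$ with terminal data $\psi^\sharp(x):=\psi(x,\bX(T))$, and $\bX$ solves the feedback equation $\bdX=-D_pH^\sharp(\bX,Du(\bX,t),t)$ with $\bX(0)=X_0$. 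Since $X_0$ has an absolutely continuous law by Assumption \ref{x0}, and $\psi^\sharp$ is bounded and Lipschitz in $x$ by Assumption \ref{psi}, the set-up of Theorem \ref{abscont} is matched exactly, once its five structural hypotheses on $(L^\sharp,H^\sharp)$ are checked.

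The first three hypotheses of Theorem \ref{abscont} (lower bound, coercivity, and the $CL^\sharp+C$ control of the first and second derivatives of $L^\sharp$) are inherited from Assumptions \ref{lcnvcoer}--\ref{DL} evaluated at the frozen arguments $(\bX(t),\bdX(t))$, as noted in the discussion preceding Theorem \ref{abscont}. The only point requiring attention is that the prefactors $E(|X|^q+|Z|^q+1)$ and the lower-bound constant appearing in Assumptions (d) and \ref{DL} must be rendered uniform in $t$; this is supplied by the a priori estimate $E(|\bX(t)|^q+|\bdX(t)|^q)\leq c_4$ of Lemma \ref{c8}, which bounds those prefactors by a constant depending only on the data.

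The genuine content of the corollary is thus to supply the remaining two hypotheses—boundedness of $D^2_{pp}H^\sharp,D^2_{px}H^\sharp$ and uniform Lipschitz continuity of $D^2_{pp}H^\sharp$—from the added assumptions (1) and (2). For the bound, hypothesis (1) gives $|D^2_{pp}H^\sharp(x,p,t)|,|D^2_{px}H^\sharp(x,p,t)|\leq C(\bX(t),\bdX(t))$, and by Lemma \ref{c8} the curve $t\mapsto(\bX(t),\bdX(t))$ stays in a fixed bounded subset of $L^q(\Omega)\times L^q(\Omega)$, so that $\sup_{t\in[0,T]}C(\bX(t),\bdX(t))<\infty$ and the derivatives are bounded by a constant uniform in $(x,p,t)$. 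For the Lipschitz property, I would combine hypothesis (2)—uniform Lipschitz dependence of $D^2_{pp}H$ on $(x,p,X,Z)$—with the regularity $\bX\in C^{1,1}([0,T];L^q(\Omega))$ from Theorem \ref{teoexist}, which makes $t\mapsto(\bX(t),\bdX(t))$ Lipschitz into $L^q(\Omega)\times L^q(\Omega)$; composition then renders $D^2_{pp}H^\sharp$ Lipschitz in $t$, and together with its Lipschitz dependence on $(x,p)$ this yields uniform Lipschitz continuity in $(x,p,t)$. With all five hypotheses verified, Theorem \ref{abscont} applies and gives that $\bX(t)$ has an absolutely continuous law for every $t<T$.

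The only non-formal step—and the one I would be most careful about—is this transfer of the $t$-dependence: both the uniform boundedness of the second derivatives and their uniform Lipschitz constant hinge on controlling the frozen trajectory $(\bX(t),\bdX(t))$ in $L^q(\Omega)$, which is precisely where the a priori bound of Lemma \ref{c8} and the $C^{1,1}$ regularity from Theorem \ref{teoexist} enter. Everything else is bookkeeping matching the hypotheses of the corollary to those of Theorem \ref{abscont} term by term.
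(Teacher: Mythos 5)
Your proposal is correct and takes exactly the same route as the paper, whose entire proof is the single sentence that the corollary follows from Theorem \ref{abscont} applied to $H^\sharp(x,p,t)=H(x,p,\bX(t),\bdX(t))$. The verification details you supply --- using Lemma \ref{c8} to make the frozen lower bounds and derivative prefactors uniform in $t$, and the $C^{1,1}$ regularity of $\bX$ from Theorem \ref{teoexist} to convert Lipschitz dependence on $(X,Z)$ into Lipschitz dependence on $t$ --- are precisely the bookkeeping the paper leaves implicit (the paper only remarks, before Theorem \ref{abscont}, that the first three hypotheses are inherited from the existence assumptions while the last two are the added ones).
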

The Corollary follows from Theorem \ref{abscont} for the Hamiltonian $H^\sharp( x, p,t)=H( x, p,\bX(t), \bdX(t)).$

\section{Uniqueness}
\label{uniq}
In this section, we discuss two approaches to the uniqueness problem. First, we illustrate how to adapt Lasry-Lions monotonicity argument for the random variable framework. However, this proof applies only  to
classical
solutions (though it may prove possible to extend it to viscosity solutions if the mean-field
trajectories admit an absolutely continuous law). Next, we consider a second uniqueness technique that gives uniqueness for a more general class of problems without any further conditions on solutions. 
This method generalizes, even for classical mean-field games, prior
results in the literature.

\subsection{Lasry-Lions monotonicity argument}
\label{monot}
In this section, we consider a version of Lasry-Lions monotonicity method to prove uniqueness for extended mean-field games. The original idea can be explained
as follows: let $(\theta, u)$ and $(\tilde \theta, \tilde u)$
be two distinct solutions of \eqref{mfgo}. Monotonicity conditions on $H$ give
\[
\frac{d}{dt}\int (\theta-\tilde \theta) (u-\tilde u)> 0.
\]
Hence, $\int (\theta-\tilde \theta) (u-\tilde u)$
is strictly monotone in time.
However, this quantity also vanishes at $t=0$ and $t=T$. This establishes a contradiction. 
For a more detailed argument, see \cite{ll1} or the notes \cite{cardaliaguet}.

In the current setting, given two solutions $(X, u)$ and $(\tilde X, \tilde u)$,
it suffices to show that 
\[
 E \left(u(X(t), t) -\tilde u(X(t), t)+\tilde u(\tilde X(t),t)-u(\tilde X(t),t)\right),
\]
is strictly monotone. 
To simplify, we assume that
\begin{enumerate}
\item
\label{h0v}
\[
H( x, p, X, Z)=H_0(x, p+\beta EZ)+V(x, X),
\]
with $\beta\geq 0$.
\item 
 $H_0(x,p)$ is convex in $p$.
\item Monotonicity Condition:
\label{montcnd}
\[
E\left(V(X, X)-V(X, \tilde X)+V(\tilde X, \tilde X)-V(\tilde X, X)\right) <0,
\]
if $Law (X)\neq Law (\tilde X)$. For all $X,\tilde X\in L^q(\Omega)$.
\item
\label{psimon}
$\psi$ satisfies
\[
E\left(\psi(X, X)-\psi(X, \tilde X)+\psi(\tilde X, \tilde X)-\psi(\tilde X, X)\right) \geq 0,
\]
for all $X,\tilde X\in L^q(\Omega)$.
\suspend{enumerate}

\begin{teo}
\label{unq1}
Assume $H$ satisfies conditions \ref{h0v}-\ref{montcnd}, $\psi$ satisfies \ref{psimon}. Then, there exists at most one (classical, in the sense of the definition in Theorem \ref{teoexist})
solution $(u,\bX)$  to \eqref{emfg}.
\end{teo}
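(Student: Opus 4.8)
The plan is to carry out the Lasry--Lions monotonicity scheme sketched just before the statement, but inside the random variable framework. Let $(u,\bX)$ and $(\tilde u,\tilde{\bX})$ be two classical solutions of \eqref{mfge} sharing the initial condition $\bX(0)=\tilde{\bX}(0)=X_0$, and set
\[
\Phi(t)=E\left(u(\bX(t),t)-\tilde u(\bX(t),t)+\tilde u(\tilde{\bX}(t),t)-u(\tilde{\bX}(t),t)\right).
\]
First I would record the two boundary values. Since $\bX(0)=\tilde{\bX}(0)$ the four terms cancel pairwise, so $\Phi(0)=0$; at $t=T$, inserting the terminal conditions $u(\cdot,T)=\psi(\cdot,\bX(T))$ and $\tilde u(\cdot,T)=\psi(\cdot,\tilde{\bX}(T))$ turns $\Phi(T)$ into exactly the expression in Assumption \ref{psimon}, so $\Phi(T)\geq 0$. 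The whole argument then reduces to proving that $\Phi$ is non-increasing and strictly decreasing at any time where $\bX(t)\neq\tilde{\bX}(t)$.

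Next I would differentiate $\Phi$ in $t$ by the chain rule; this is legitimate precisely because the solutions are classical, which is the only place smoothness is truly needed and the reason the theorem is restricted to classical solutions. On the diagonal, where $u$ is evaluated along its own optimal trajectory $\bX$, I would combine the Hamilton--Jacobi equation $u_t=H(\bX,D_xu(\bX),\bX,\bdX)$ with the feedback law $\bdX=-D_pH(\bX,D_xu(\bX),\bX,\bdX)$; by the Legendre duality $H=L^*$ the envelope identity collapses both contributions into the single term $\tfrac{d}{dt}u(\bX)=-L(\bX,\bdX,\bX,\bdX)$, and likewise $\tfrac{d}{dt}\tilde u(\tilde{\bX})=-L(\tilde{\bX},\dot{\tilde{\bX}},\tilde{\bX},\dot{\tilde{\bX}})$. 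For the two cross terms, where $\tilde u$ is evaluated along the foreign trajectory $\bX$ and vice versa, I would instead use Fenchel's inequality $H(x,p,X,Z)+v\cdot p\geq -L(x,v,X,Z)$, obtaining $\tfrac{d}{dt}\tilde u(\bX)\geq -L(\bX,\bdX,\tilde{\bX},\dot{\tilde{\bX}})$ and $\tfrac{d}{dt}u(\tilde{\bX})\geq -L(\tilde{\bX},\dot{\tilde{\bX}},\bX,\bdX)$, each carrying exactly the sign required.

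Assembling the four relations yields
\[
\frac{d}{dt}\Phi(t)\leq E\left(-L(\bX,\bdX,\bX,\bdX)+L(\bX,\bdX,\tilde{\bX},\dot{\tilde{\bX}})-L(\tilde{\bX},\dot{\tilde{\bX}},\tilde{\bX},\dot{\tilde{\bX}})+L(\tilde{\bX},\dot{\tilde{\bX}},\bX,\bdX)\right).
\]
The decisive step, and the one I expect to be the main obstacle, is to show this right-hand side is $\leq 0$; here the additive structure of Assumption \ref{h0v} is essential. Taking Legendre transforms gives $L(x,v,X,Z)=L_0(x,v)+\beta\,v\cdot EZ-V(x,X)$, where $L_0=H_0^*$ is well defined by the convexity of $H_0$ in $p$. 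Substituting, the four $L_0$ contributions cancel identically, the four velocity-coupling terms reassemble, after taking expectations and using that $EZ$ is deterministic, into $-\beta\,|E\bdX-E\dot{\tilde{\bX}}|^2\leq 0$ because $\beta\geq 0$, and the four $V$ terms reduce to $E\left(V(\bX,\bX)-V(\bX,\tilde{\bX})+V(\tilde{\bX},\tilde{\bX})-V(\tilde{\bX},\bX)\right)$, which is strictly negative whenever $\bX(t)\neq\tilde{\bX}(t)$ by Assumption \ref{montcnd}. Thus $\tfrac{d}{dt}\Phi\leq 0$, with strict inequality at any time the two trajectories differ.

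Finally I would close the argument as in the classical scheme: since $\Phi$ is non-increasing with $\Phi(0)=0$ we get $\Phi(T)\leq 0$, which together with $\Phi(T)\geq 0$ forces $\Phi\equiv 0$ and hence $\tfrac{d}{dt}\Phi\equiv 0$. By the strict inequality just established this gives $\bX(t)=\tilde{\bX}(t)$ for all $t$, whence $\bdX=\dot{\tilde{\bX}}$ as well. Then $u$ and $\tilde u$ solve one and the same Hamilton--Jacobi equation with identical terminal data $\psi(\cdot,\bX(T))$, and uniqueness of its solution yields $u=\tilde u$. The only genuine difficulties are the justification of the term-by-term differentiation, which requires classical regularity, and the exact cancellation in the preceding paragraph, which hinges entirely on the special form of the Hamiltonian in Assumption \ref{h0v}.
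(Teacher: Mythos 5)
Your proof is correct, and at the structural level it follows the same Lasry--Lions scheme as the paper: the same functional $\Phi(t)$, the same endpoint values $\Phi(0)=0$ and $\Phi(T)\geq 0$ from Assumption \ref{psimon}, and the same conclusion via monotonicity of $\Phi$. Where you genuinely depart from the paper is in how the differential inequality is produced. The paper works entirely on the Hamiltonian side: it expands $\tfrac{d}{dt}\left(u(\bX(t),t)-\tilde u(\bX(t),t)\right)$ using the structure \ref{h0v}, applies the convexity gradient inequality $H_0(q_1,x)-H_0(q_2,x)\leq D_pH_0(q_1,x)\cdot(q_1-q_2)$, and uses the feedback relation $\bdX=-D_pH_0(D_xu(\bX)+\beta E\bdX,\bX)$ to cancel the $D_xu-D_x\tilde u$ terms, leaving directly the $\beta$-term and the $V$-terms. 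You instead pass to the Lagrangian side: the envelope identity on the diagonal terms and Fenchel--Young on the cross terms give the intermediate inequality $\tfrac{d}{dt}\Phi\leq -E\left[\text{cyclic sum of } L\right]$, which is valid for \emph{any} convex Lagrangian, and only then do you invoke \ref{h0v} to compute the cyclic sum explicitly. The two mechanisms are equivalent in content (the gradient inequality for $H_0$ is Fenchel--Young for the pair $(L_0,H_0)$), but your decomposition is precisely the differential version of the paper's second uniqueness argument (condition \ref{Lmon} and Theorem \ref{unq2}); your proof thus makes concrete the paper's own remark, following Theorem \ref{unq2}, that the Lagrangian-monotonicity approach subsumes this theorem. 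A side benefit: your closing is more careful than the paper's. The paper asserts strict negativity of the derivative and declares a contradiction, tacitly assuming the trajectories differ and leaving implicit the final identification of the value functions; you correctly note that strict decrease holds only where $\bX(t)\neq\tilde\bX(t)$, deduce $\Phi\equiv 0$, hence $\bX\equiv\tilde\bX$, and then obtain $u=\tilde u$ from uniqueness for the common Hamilton--Jacobi equation.
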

\begin{proof}
Let $(u,\bX)$ and $(\tilde u,\tilde \bX)$ be two solutions of \eqref{emfg}. Then we have
\begin{align*}
&\frac{d}{dt} \left( u(\bX(t), t)-\tilde u(\bX(t),t)\right)=H_0(\bX(t), D_xu(\bX(t),t)+\beta E\dot \bX )-\\
&H_0(\bX(t), D_x\tilde u(\bX(t),t)+\beta E \dot{\tilde \bX})+V(\bX(t), \bX(t))-V(\bX(t), \tilde \bX(t))+\\&\dot X\cdot  (D_x u(\bX(t), t)-D_x\tilde u(\bX(t),t))\leq (D_pH_0( \bX(t),D_xu(\bX(t),t)+\beta E\dot \bX)\\&+\dot \bX\cdot (D_x u(\bX(t), t)-D_x\tilde u(\bX(t),t))+\beta\dot \bX \cdot ( E \dot{\tilde \bX}- E \dot  \bX)+V(\bX(t), \bX(t))-V(\bX(t), \tilde \bX(t)).
\end{align*}
We add a similar expression for $\frac{d}{dt} \tilde u(\tilde \bX(t), t)-u(\tilde \bX(t),t)$ and
obtain
\begin{align*}
\frac{d}{dt} &\left( u(\bX(t), t)-\tilde u(\bX(t),t)+ \tilde u(\tilde \bX(t), t)-u(\tilde \bX(t),t)\right)\\
=&V(\bX(t), \bX(t))-V(\bX(t), \tilde \bX(t))+V(\tilde \bX(t), \tilde \bX(t))-V(\tilde \bX(t), \bX(t))\\
&+\beta (\dot \bX-\dot {\tilde \bX}) E (\dot{\tilde \bX}-E \dot  \bX).
\end{align*}
By taking the expectation and using both the monotonicity condition and $\beta\geq 0$, we get that
\begin{equation}\label{sharp}
\frac{d}{dt}E \left( u(\bX(t), t)-\tilde u(\bX(t),t)+ \tilde u(\tilde \bX(t), t)-u(\tilde \bX(t),t)\right)<0.\tag{$\star$}
\end{equation}

This is a contradiction since
\[
E\left( u(\bX(0), 0)-\tilde u(\bX(0),0)+ \tilde u(\tilde \bX(0), 0)-u(\tilde \bX(0),0)\right)=0,
\]
and
\[
E\left( u(\bX(T), T)-\tilde u(\bX(T),T)+ \tilde u(\tilde \bX(T), t)-u(\tilde \bX(T),T)\right)=\]
\[
E\left(\psi(\bX(T), \bX(T))-\psi(\bX(T), \tilde \bX(T))+\psi(\tilde \bX(T), \tilde \bX(T))-\psi(\tilde \bX(T), \bX(T))\right) \geq 0.
\]
\end{proof}

\begin{remark}
Assumption \ref{h0v} is not essential, but it  simplifies the computations and the remaining conditions substantially. To obtain a more general condition for uniqueness one can compute the expression on the left-hand side of \eqref{sharp} in terms of $Du$, $D\tilde u$, $X$, and $\tilde X$ and require the resulting expression to be positive, unless $Du=D\tilde u$ and $Law(\bX, \dot \bX)=Law(\tilde \bX, \dot \tilde\bX).$ However, a general condition is easier to write in terms of Lagrangians and is considered in the next section.
\end{remark} 

To prove the uniqueness of solutions $(u,\bX)$, if $u$ is not everywhere differentiable, we assume further
\resume{enumerate}
\item
$X_0\in L^q(\Omega)$ has an absolutely continuous law with respect to the Lebesgue measure;
\item
$\psi$ is bounded and Lipschitz in $x$;
\item
\label{Vbd}
$V$ is $C^2$ bounded in $x$;
\item
\label{dL0bd}
There exists a constant $C>0$, such that
$|D_vL_0|,|D^2_{vv}L_0|\leq CL_0+C$, where $L_0$ is the Legendre transform of $H_0$;
\item
\label{H0bd}
$D^2_{pp}H_0$ is Lipschitz continuous.
\suspend{enumerate}

\begin{teo}
\label{unq1'}
Suppose Conditions \ref{h0v}-\ref{H0bd} hold. Then, there exists at most one solution $(u,\bX)$  to \eqref{emfg} with $\bX\in C^1([0,T],L^q(\Omega))$ and $\bdX$ Lipschitz in $t$.
\end{teo}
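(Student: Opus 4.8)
The plan is to upgrade the monotonicity computation of Theorem~\ref{unq1} to the viscosity setting, working with the same functional
\[
\Phi(t) = E\bigl(u(\bX(t),t) - \tilde u(\bX(t),t) + \tilde u(\tilde\bX(t),t) - u(\tilde\bX(t),t)\bigr),
\]
where $(u,\bX)$ and $(\tilde u,\tilde\bX)$ are two solutions. In Theorem~\ref{unq1} differentiating $\Phi$ was legitimate because $u,\tilde u$ were classical; now they are only Lipschitz and semiconcave (Lemma~\ref{blsmc}), so the \emph{cross} terms $\tilde u(\bX(t),t)$ and $u(\tilde\bX(t),t)$ cannot be differentiated naively. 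The first step is to observe that the additional hypotheses (absolute continuity of the law of $X_0$ together with Assumptions~\ref{Vbd}, \ref{dL0bd}, \ref{H0bd}) are exactly those of the corollary to Theorem~\ref{abscont}; hence both $\bX(t)$ and $\tilde\bX(t)$ have laws absolutely continuous with respect to Lebesgue measure for every $t\in(0,T)$.

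Next I would show that $\Phi$ is differentiable for a.e.\ $t\in(0,T)$ and identify its derivative termwise. For the \emph{own} terms, $u$ is differentiable along its own optimal trajectory for $t>0$ (this is built into Theorem~\ref{teoexist}, where $\bP(t)=D_xu(\bX(t),t)$), so $\frac{d}{dt}u(\bX(t),t)=u_t+D_xu\cdot\dot\bX$ holds a.s.\ with $u_t$ given by the Hamilton-Jacobi equation, and symmetrically for $\tilde u(\tilde\bX(t),t)$. For the \emph{cross} terms I would invoke absolute continuity: since $\tilde u(\cdot,t)$ is semiconcave it is differentiable off a Lebesgue-null set, and because the law of $\bX(t)$ is absolutely continuous, $D_x\tilde u(\bX(t),t)$ is defined a.s.; a Fubini argument on $\Rr^d\times(0,T)$ (using that the law of $\bX(t)$ is absolutely continuous for a.e.\ $t$) then shows that for a.e.\ $t$ the point $(\bX(t)(\omega),t)$ lies outside the singular set of $\tilde u$ for a.e.\ $\omega$, which legitimizes $\frac{d}{dt}\tilde u(\bX(t),t)=\tilde u_t+D_x\tilde u\cdot\dot\bX$ a.s. The interchange of $E$ with $\frac{d}{dt}$ is then justified by dominated convergence, all integrands being bounded (Lemma~\ref{blsmc}) and Lipschitz in $t$.

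With all gradients available a.s., I would repeat the algebra of Theorem~\ref{unq1}. Inserting the two Hamilton-Jacobi equations, using the splitting $H(x,p,X,Z)=H_0(p+\beta EZ,x)+V(x,X)$ from Assumption~\ref{h0v} together with the feedback $\dot\bX=-D_pH_0(D_xu+\beta E\dot\bX,\bX)$, the convexity of $H_0$ converts each mixed Hamiltonian difference into a term that cancels the $(D_xu-D_x\tilde u)\cdot\dot\bX$ contributions, leaving
\[
\frac{d}{dt}\Phi(t)\le E\bigl(V(\bX,\bX)-V(\bX,\tilde\bX)+V(\tilde\bX,\tilde\bX)-V(\tilde\bX,\bX)\bigr)-\beta\,\bigl|E\dot\bX-E\dot{\tilde\bX}\bigr|^2.
\]
By the monotonicity condition (Assumption~\ref{montcnd}) the first term is strictly negative whenever $\bX(t)\ne\tilde\bX(t)$, and the second is $\le0$ since $\beta\ge0$; thus $\frac{d}{dt}\Phi\le0$, with strict inequality on any set of times where $\bX\ne\tilde\bX$. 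Since $\bX(0)=\tilde\bX(0)=X_0$ gives $\Phi(0)=0$, while Assumption~\ref{psimon} gives $\Phi(T)\ge0$, and $\Phi$ is continuous on $[0,T]$ by the $C^1$ regularity of $\bX,\tilde\bX$, strict decrease is impossible; therefore $\bX\equiv\tilde\bX$. Finally, $\bX=\tilde\bX$ (hence $\dot\bX=\dot{\tilde\bX}$) forces $u$ and $\tilde u$ to be viscosity solutions of one and the same Hamilton-Jacobi equation with identical terminal data, so the comparison principle used in the proof of Lemma~\ref{Fcont} yields $u=\tilde u$.

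I expect the main obstacle to be the rigorous justification of differentiating the cross terms, i.e.\ promoting the pointwise-in-$\omega$ semiconcavity and viscosity information to an a.e.-in-$t$ derivative of the expectation; this is precisely where absolute continuity of the laws of $\bX(t)$ and $\tilde\bX(t)$, and hence Theorem~\ref{abscont}, is indispensable. A technically lighter alternative that sidesteps the gradients is to bound $\Phi$ using only the \emph{variational} inequalities: along its own trajectory the value function satisfies the exact identity $\frac{d}{dt}u(\bX,t)=-L(\bX,\dot\bX,\bX,\dot\bX)$, while suboptimality of $\bX$ for the control problem defining $\tilde u$ gives the one-sided bound $\frac{d}{dt}\tilde u(\bX,t)\ge -L(\bX,\dot\bX,\tilde\bX,\dot{\tilde\bX})$; adding the four such relations and using $L(x,v,X,Z)=L_0(x,v)+\beta v\cdot EZ-V(x,X)$ reproduces the same estimate for the upper Dini derivative $D^+\Phi$ without ever differentiating $u$ or $\tilde u$.
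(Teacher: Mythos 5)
Your proposal is correct and takes essentially the same route as the paper: the paper's proof likewise checks that the additional hypotheses make the frozen Hamiltonians $H(x,p,\bX(t),\bdX(t))$ and $H(x,p,\tilde\bX(t),\dot{\tilde\bX}(t))$ satisfy the conditions of Theorem \ref{abscont}, concludes that $\bX(t)$ and $\tilde\bX(t)$ have absolutely continuous laws so that $Du$ and $D\tilde u$ are defined a.s.\ along both trajectories, and then reruns the monotonicity computation of Theorem \ref{unq1}. The paper states this in three lines without the Fubini/chain-rule justification or the closing variational alternative, so your write-up is a more detailed rendering of the same argument rather than a different one.
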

\begin{proof}
Let $(u,\bX)$ and $(\tilde u,\tilde \bX)$ be two solutions of the system \eqref{emfg} with $\bX, \tilde \bX\in C^1([0,T],L^q(\Omega))$ and $\bdX,\dot{\tilde\bX}$ Lipschitz in $t$.
It is easy to check that under the conditions on $H_0$ and $V$ the Hamiltonians
$\widetilde{H}_1( x, p,t)=H( x, p,\bX(t),\bdX(t))$ and $\widetilde{H}_2( x, p,t)=H( x, p,\tilde\bX(t),\dot{\tilde\bX}(t))$ satisfy the conditions of the Theorem \ref{abscont}.
Thus, we conclude that $\bX(t)$ and $\tilde \bX(t),$ $t<T$ have absolutely continuous laws with respect to the Lebesgue measure. Because $u,\tilde u$ are Lipschitz, $Du(\bX(t),t)$, $D u(\tilde\bX(t),t)$, $D\tilde u(\bX(t),t)$, $D\tilde u(\tilde\bX(t),t)$ are well defined almost surely. Therefore, the arguments from the proof of Theorem \ref{unq1} hold without changes.
\end{proof}

\subsection{The second approach for uniqueness}
\label{uniq2}

In this section, we discuss another method to prove the uniqueness of solutions of \eqref{mfge}.
This approach is valid even if the law of $\bX$ is not absolutely continuous and extends Theorem \eqref{unq1'}. Consequently, it leads to a more general uniqueness result.
We will assume that $L$ satisfies the following monotonicity condition:
\resume{enumerate}
\item \label{Lmon}
\[E\bigg(L(X,Z,X,Z)-L(\tilde X, \tilde Z, X, Z)+L(\tilde X, \tilde Z, \tilde X, \tilde Z)-L(X, Z, \tilde X,\tilde Z) \bigg)>0\]
if $Law (X, Z)\neq Law(\tilde X, \tilde Z)$.
\suspend{enumerate}

\begin{teo}
\label{unq2}
Under Assumptions \ref{x0}-\ref{DL}, \ref{psimon} and \ref{Lmon}  there exists at most one solution to \eqref{mfge}.
\end{teo}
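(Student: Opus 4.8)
The plan is to abandon the PDE-monotonicity computation of Theorems \ref{unq1}--\ref{unq1'} entirely and to argue instead purely at the level of the underlying optimal control problems, so that no differentiability of $u$ across $\Rr^d$ is needed. The starting observation is a \emph{variational characterization} of any solution: if $(u,\bX)$ solves \eqref{mfge}, then $u(\cdot,0)$ is the value function of the control problem \eqref{utild} with the frozen environment $(\bX,\bdX)$, and $\bX$ is an optimal trajectory for it. Concretely, for every absolutely continuous curve $\by$ in $L^q(\Omega)$ with $\by(0)=X_0$ one has
\[
E\int_0^T L(\bX,\bdX,\bX,\bdX)\,ds+E\,\psi(\bX(T),\bX(T))\le E\int_0^T L(\by,\bdy,\bX,\bdX)\,ds+E\,\psi(\by(T),\bX(T)),
\]
and symmetrically for $(\tilde u,\tilde\bX)$ against its own environment $(\tilde\bX,\dot{\tilde\bX})$. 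I would establish this by noting that, since $\bX$ follows the feedback ODE in \eqref{mfge} and $u$ is a.s.\ differentiable along $(\bX(t),t)$, the curve $s\mapsto\bX(s)(\omega)$ is, for a.e.\ $\omega$, the optimal trajectory issuing from $X_0(\omega)$; integrating the pointwise optimality inequality $u(X_0(\omega),0)\le\int_0^T L(\by,\bdy,\bX,\bdX)\,ds+\psi(\by(T)(\omega),\bX(T))$ over $\Omega$ yields the displayed inequality.

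Next I would carry out the cross-comparison. Using $\tilde\bX$ as a competitor in the control problem associated with $\bX$, and $\bX$ as a competitor in the one associated with $\tilde\bX$, produces two inequalities; adding them and rearranging gives
\[
\int_0^T E\Big(L(\bX,\bdX,\bX,\bdX)-L(\tilde\bX,\dot{\tilde\bX},\bX,\bdX)+L(\tilde\bX,\dot{\tilde\bX},\tilde\bX,\dot{\tilde\bX})-L(\bX,\bdX,\tilde\bX,\dot{\tilde\bX})\Big)\,ds
\]
\[
\le E\Big(\psi(\tilde\bX(T),\bX(T))+\psi(\bX(T),\tilde\bX(T))-\psi(\bX(T),\bX(T))-\psi(\tilde\bX(T),\tilde\bX(T))\Big).
\]
By the terminal monotonicity \ref{psimon} the right-hand side is $\le 0$, while after Fubini the left-hand side is $\int_0^T E(\cdots)\,ds$ with nonnegative $s$-integrand by the Lagrangian monotonicity \ref{Lmon}. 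Hence both sides vanish, and the strict part of \ref{Lmon} forces $\Ll(\bX(s),\bdX(s))=\Ll(\tilde\bX(s),\dot{\tilde\bX}(s))$ for a.e.\ $s\in[0,T]$.

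Finally I would upgrade equality of laws to equality of the solutions. Because $\bX,\tilde\bX\in C^{1,1}([0,T];L^q(\Omega))$, the map $s\mapsto\Ll(\bX(s),\bdX(s))$ is continuous, so the a.e.\ identity of joint laws in fact holds for \emph{every} $s$. Since $H$ and $\psi$ depend only on these laws, the two frozen Hamiltonians coincide and the two terminal data agree; by uniqueness of viscosity solutions (Assumption \ref{hjuniqc}, exactly as exploited in Lemma \ref{Fcont}) we get $u=\tilde u$. Both $\bX$ and $\tilde\bX$ then solve the \emph{same} feedback ODE with the same initial datum $X_0$, whose right-hand side is Lipschitz after the substitution $\bdX=G(\bX,D_xu(\bX,t),\bX)$ from Assumption \ref{solvb}; Picard--Lindelof uniqueness gives $\bX=\tilde\bX$.

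I expect the main obstacle to be the first step: rigorously justifying the variational characterization for \emph{viscosity} (non-classical) solutions, i.e.\ ensuring that the feedback trajectory $\bX$ genuinely realizes the infimum in \eqref{utild} and that $u$ coincides with the control value function, using only the semiconcavity and the a.s.\ differentiability of $u$ along $(\bX(t),t)$ guaranteed by Theorem \ref{teoexist}. Everything downstream — the cross-comparison and the squeezing via \ref{Lmon}--\ref{psimon} — is then a short and robust computation that, crucially, never invokes absolute continuity of the law of $\bX$ nor any regularity of $u$ beyond what the optimality of the trajectories already provides.
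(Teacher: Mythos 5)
Your core argument coincides with the paper's own proof of Theorem \ref{unq2}: the paper writes exactly your four relations (two optimality identities plus two cross-competitor inequalities, testing $\tilde\bX$ against the environment $(\bX,\bdX)$ and vice versa), takes expectations, uses $\bX(0)=\tilde\bX(0)=X_0$ so that the combined left-hand side vanishes, and squeezes with Assumptions \ref{psimon} and \ref{Lmon}. It also treats the variational characterization -- that $u,\tilde u$ are the value functions and that $\bX(\omega),\tilde\bX(\omega)$ are a.s.\ optimal trajectories -- as standard; the step you flag as the main obstacle is passed over in one sentence there as well.

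Where you diverge is the endgame, and there your argument has a genuine flaw. The paper reads Assumption \ref{Lmon} literally: the expression is strictly positive whenever $(X,Z)\neq(\tilde X,\tilde Z)$ \emph{as random variables}, so the vanishing of the integrand forces $(\bX(s),\bdX(s))=(\tilde\bX(s),\dot{\tilde\bX}(s))$ for a.e.\ $s$, hence $\bX\equiv\tilde\bX$ by continuity, and only then does it invoke uniqueness of viscosity solutions to get $u=\tilde u$. You instead extract only equality of joint laws (a defensible reading, since the monotonicity expression in \ref{Lmon} actually depends only on $\Ll(X,Z)$ and $\Ll(\tilde X,\tilde Z)$), but that obliges you to recover $\bX=\tilde\bX$ at the end, and your recovery fails as written: after the substitution $\bdX=G(\bX,D_xu(\bX,t),\bX)$, the right-hand side of the feedback ODE is \emph{not} Lipschitz in $\bX$, because $D_xu(\cdot,t)$ is only defined almost everywhere and may jump ($u$ is merely Lipschitz and semiconcave); the Lipschitz continuity of $G$ from Assumption \ref{solvb} does nothing for the composition, so Picard--Lindel\"of is unavailable. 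This is precisely why the proof of Theorem \ref{teoexist} runs uniqueness through the Hamiltonian flow \eqref{hjf} in the pair $(\bx,\bp)$, whose vector field is Lipschitz in $(\bx,\bp)$, with the nonsmoothness of $u$ confined to the initial momentum $\bp(x,0)=D_xu(x,0)$, well defined a.s.\ because $\Ll(X_0)$ is absolutely continuous (Assumption \ref{x0}). The correct repair of your last step is the same device: once $u=\tilde u$, for a.e.\ $\omega$ both $\bX(\omega)$ and $\tilde\bX(\omega)$ are optimal trajectories of the same control problem issuing from $X_0(\omega)$, a.s.\ a point of differentiability of $u(\cdot,0)$, and such trajectories are unique because they are generated by the Hamiltonian flow with prescribed initial momentum $D_xu(X_0(\omega),0)$. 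Alternatively, adopt the paper's literal reading of \ref{Lmon}, and the whole detour disappears. (A minor shared defect: both your appeal to Assumption \ref{hjuniqc} and the paper's appeal to ``uniqueness of viscosity solutions'' use hypotheses not listed in the statement of Theorem \ref{unq2}, which assumes only \ref{x0}--\ref{DL}, \ref{psimon} and \ref{Lmon}.)
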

\begin{proof}
Suppose $(\bX, u)$ and $(\tilde \bX, \tilde u)$ are two solutions of \eqref{mfge}. For a.e. $\omega$,
 $\bX(\omega)$ and $\tilde \bX(\omega)$ are optimal trajectories for the optimal control problems
with Lagrangians $L(x, v, \bX,\dot \bX)$ and $L(x, v, \tilde \bX, \dot{\tilde \bX})$ and terminal values $\psi(x,\bX(T))$ and $\psi(x,\tilde \bX(T))$, respectively. Since $u,\tilde u$ are the respective value functions, we have
\[
u(\bX(0),0)=\int\limits_0^TL(\bX(s), \dot \bX(s), \bX(s),\dot \bX(s))ds+ \psi(\bX(T),\bX(T)),
\]
\[
u(\tilde \bX(0),0)\leq\int\limits_0^TL(\dot {\tilde \bX(s), \tilde \bX}(s), \bX(s), \dot \bX(s))ds+ \psi(\tilde \bX(T), \bX(T)),
\]
\[
\tilde u(\tilde \bX(0),0)=\int\limits_0^TL(\tilde \bX(s), \dot {\tilde \bX}(s),\tilde \bX(s),\dot {\tilde \bX}(s))ds+ \psi(\tilde \bX(T),\tilde \bX(T)),
\]
and
\[
\tilde u(\bX(0),0)\leq\int\limits_0^TL(\bX(s), \dot \bX(s),\tilde \bX(s),\dot {\tilde \bX}(s))ds+ \psi(\bX(T),\tilde \bX(T)).
\]
By combining the previous expressions, we get the inequality
\begin{align}
0=E\left( u(\bX(0), 0)-\tilde u(\bX(0),0)+ \tilde u(\tilde \bX(0), 0)-u(\tilde \bX(0),0)\right) \geq & \nonumber\\
\int\limits_0^T E\bigg(L(\bX(s), \dot \bX(s), \bX(s),\dot \bX(s))-L(\tilde \bX(s), \dot {\tilde \bX}(s), \bX(s),\dot \bX(s))+ & \nonumber\\
L(\tilde \bX(s), \dot {\tilde \bX}(s), \tilde \bX(s),\dot {\tilde \bX}(s))-L(\bX(s), \dot \bX(s), \tilde \bX(s), \dot {\tilde \bX}(s)) \bigg) ds + & \nonumber\\
E\left(\psi(\bX(T),\bX(T))-\psi(\tilde \bX(T),\bX(T))+\psi(\tilde \bX(T),\tilde \bX(T))-\psi(\bX(T),\tilde \bX(T))\right). &
\end{align}
Conditions \ref{Lmon}, \ref{psimon}, and the preceding inequalities imply $\bX(s)=\tilde \bX(s)$. Then, the uniqueness of viscosity solutions yields $u=\tilde u$.
\end{proof}
The monotonicity condition \ref{Lmon} is implied by a condition that depends only on the second derivatives of $L$.
Indeed, we have
\begin{align*}
E\bigg(L(X,Z,X,Z)-L(\tilde X, \tilde Z, X, Z)+L(\tilde X, \tilde Z, \tilde X, \tilde Z)-L(X, Z, \tilde X,\tilde Z) \bigg)= & \\
\int_0^1\int_0^1 E[(Z-\tilde Z)^TD^2_{vZ}L\cdot(Z-\tilde Z)+(X-\tilde X)^TD^2_{xX}L\cdot(X-\tilde X)+& \\
(Z-\tilde Z)^TD^2_{vX}L\cdot(X-\tilde X)+(X-\tilde X)^TD^2_{xZ}L\cdot (Z-\tilde Z)]d\tau d\theta.
\end{align*}
Here, all the derivatives of $L$ are evaluated at $(Z_\tau,X_\tau,Z_\theta,X_\theta)$, with
$X_\theta=(1-\theta)X+\theta\tilde X$ and $Z_\theta=(1-\theta)Z+\theta\tilde Z.$\\
Consequently, uniqueness holds if
\[
E[Z^TD^2_{vZ}L\cdot Z+Y^TD^2_{xX}L\cdot Y+
Z^TD^2_{vX}L\cdot Y+Y^TD^2_{xZ}L\cdot Z]>0, \text{ for all } (Y,Z)\neq 0,
\]
where the derivatives are evaluated at an arbitrary point $(A,B,C,D)\in (L^q(\Omega))^4$.

It is easy to see that, in the proof of Theorem \ref{unq2}, we use a weaker version of condition
\ref{Lmon}, namely
\resume{enumerate}
\item \label{Lmon'}
\[E\bigg(L(X,Z,X,Z)-L(\tilde X, \tilde Z, X, Z)+L(\tilde X, \tilde Z, \tilde X, \tilde Z)-L(X, Z, \tilde X,\tilde Z) \bigg)\leq0,\]
if and only if $L(x, v, X,Z)=L(x, v, \tilde X,\tilde Z)$ for all $v,x\in\Rr^d$.
\end{enumerate}

An example that satisfies \ref{Lmon'} is:
\[
L(x, v, X,Z)=L_0(v)+\beta vEZ - V(x,X),
\]
where $L_0$ is strictly convex, $\beta\geq 0$ and $V$ satisfies the monotonicity condition \ref{psimon}. The corresponding Hamiltonian for this Lagrangian is
\[
H( x, p,Z)=H_0(p+\beta EZ)+V(x,X)
\]
where $H_0=L_0^*$. Thus, the uniqueness  result in this section generalizes the result in the previous section.

Another example of a Lagrangian that satisfies condition \ref{Lmon'} is
\[
L(x, v, X,Z)=\frac{|v+\beta EZ|^2}{2}-V(x,X),
\]
where $V$ satisfies the monotonicity condition \ref{psimon}.

\section{Final Remarks}
\label{chp: furthprob}

In this concluding section, we briefly discuss the master 
equation formulation for deterministic mean-field games. A probabilistic approach for mean-field problems with common noise was examined extensively in 
\cite{Carmona3}.

\subsection{Master Equation in the deterministic case}

In this section, we consider the so-called {\em Master Equation}  for deterministic MFG's. The Master Equation for mean-field games was introduced by Lions in his Coll\'ege de France lectures.
Let $(\Omega,\, \Fff, P)$ be a probability space, where $\Omega$ is an arbitrary set,
$\Fff$ is a $\sigma$-algebra on $\Omega$, and $P$ is a probability measure.
We start by considering the optimal control problem:
\begin{equation}
\label{gvalfunc}
V(x,Y,t)=\inf\limits_{\bv}\left[\int_t^TL(\bx(s), \bv(s), \bY(s), \dot \bY(s))ds+\psi(\bx(T),\bY(T))\right],
\end{equation}
where $\bx$ is the trajectory of a player starting at time $t$ at the point $\bx(t)=x$, controlled by
$\dot\bx=\bv$.
$\bY(\cdot)$ is the trajectory of the population of the players who move along a vector field $$b\colon L^q(\Omega;\Rr^d)\times[0,+\infty)\to L^q(\Omega;\Rr^d).$$
More precisely, if the random variable  $Y$ corresponds to the initial states of the population, 
its evolution is given by
\begin{equation}
\label{YbY}
\dot \bY=b(\bY,t),\quad \bY(0)=Y.
\end{equation}
We define
$L^q_{ac}(\Omega, \Rr^d)$ to be the subspace of $L^q(\Omega, \Rr^d)$, consisting
of random variables that have absolutely continuous laws. We assume  the vector field $b$ is such that for any $Y\in L^q_{ac}(\Omega;\Rr^d)$, equation \eqref{YbY} has a unique solution in
$L^q(\Omega;\Rr^d)$.
It follows from above that $V$ is well defined and is a viscosity solution of the Hamilton-Jacobi equation
\begin{equation}
\label{ocpp}
-V_t-D_YV(x,Y,t)\cdot b(Y,t)+H(x, D_xV(x,Y,t),Y,b(Y,t))=0,
\end{equation}
where
\[
H( x, p,Y,Z)=\sup\limits_v\{\,-p\cdot v-L(x, v, Y,Z)\,\}.
\]
If $V$ is a classical solution to \eqref{ocpp},
the optimal control is given in the feedback form by $v^*=-D_pH(x, D_xV(x,Y,t),Y,\dot Y)$.

As before,
we assume all players act rationally. Then
each of them follows the optimal flow, that is
\[
\dot \bY=-D_pH(\bY, D_xV(\bY,\bY,t),\bY, \dot \bY).
\]
We assume one can solve the previous equation with respect to $\dot \bY$ as $\dot \bY= G(D_xV(\bY,\bY,t),\bY)$. Hence, we have $b(Y,t)=G(D_xV(Y,Y,t),Y)$.
Thus, we end up with the equation
\begin{equation}
\label{master'}
\begin{cases}
-V_t(x,Y,t)+D_YV(x,Y,t)\cdot G(D_xV(Y,Y,t),Y)+H(x, D_xV(x,Y,t),Y,b(Y,t))=0,\\
V(x,Y,T)=\psi(x,Y).
\end{cases}
\end{equation}
We call this the {\em master equation}.

In general, first-order PDEs do not admit classical solutions. Consequently, we
must look, for instance, for solutions that are differentiable
almost everywhere with respect to the variable x. Therefore, one can make sense of $G(D_xV(Y,Y,t),Y)$ provided $Y$
has an absolutely continuous law. For this reason, we work in the space $L^q_{ac}(\Omega;\Rr^d)$.
\begin{df}
Let $V\colon\Rr^d\times L^q_{ac}(\Omega;\Rr^d)\times [0,T]\to\Rr^d$ be a continuous function, Lipschitz continuous in the first variable, and $b\colon L^q_{ac}(\Omega;\Rr^d)\times [0,T]\to\Rr^d$ a vector field. We say that the couple $(V,b)$ is a solution to \eqref{master'} if
\begin{itemize}
\item
$V$ is a viscosity solution of
\begin{equation*}
\label{mast}
\begin{cases}
-V_t(x,Y,t)+D_yV(x,Y,t)\cdot b(Y,t)+H(x, D_xV(x,Y,t),Y,b(Y,t))=0,\\
V(x,Y,T)=\psi(x,Y),
\end{cases}
\end{equation*}
that is, for any continuous function $\phi\colon\Rr^d\times L^q(\Omega)\times[0,T]\to\Rr$ and any point $(x,Y,t)\in \argmax V-\phi$ (resp. argmin) where $\phi$ is differentiable
\[
-\phi_t(x,Y,t)+D_y\phi(x,Y,t)\cdot b(Y,t)+H(x, D_x\phi(x,Y,t),Y,b(Y,t))\leq0\; (\text{resp. } \geq 0).
\]
\item
$b(Y,t)=G(D_xV(Y,Y,t),Y)$ a.s. for any $Y\in  L^q_{ac}(\Omega;\Rr^d)$.
\end{itemize}
\end{df}

\vskip0.3cm
Next, we assume that $L$ and $\psi$ satisfy the following hypotheses:
\begin{enumerate}
\item
\label{psi'}
$\psi$ is bounded in both variables and Lipschitz  in $x$:
\[
|\psi(x_1,X)-\psi(x_2,X)|\leq C|x_1-x_2|,\quad \forall x_1, x_2\in\Rr^d.
\]
\item
There exist constants $c_0,c_1>0$ and a vector function $v_0\colon L^q(\Omega;\Rr^d)\times L^q(\Omega;\Rr^d)\to \Rr^d$ such that
\[L(x, v, X,Z)\geq-c_0\]
and
\[
L(x, v_0(X,Z), X,Z)\leq c_1
\]
for all $x,v\in\Rr^d,X,Z\in L^q(\Omega;\Rr^d).$
\item
\label{L'}
$L$ is twice differentiable in $x,v$, and we have the following bounds
\[
|D_xL(x, v, X,Z)|, |D^2_{xx}L(x, v, X,Z)|,|D^2_{xv}L(x, v, X,Z)|,|D^2_{vv}L(x, v, X,Z)|\leq CL(x, v, X,Z)+C
\]
for all $x,v\in\Rr^d,X\in L^q(\Omega;\Rr^d)$.
\end{enumerate}

\begin{pro}
Assume that \ref{psi'}-\ref{L'} hold. Then the function $V$ defined in \eqref{gvalfunc} for a fixed vector field $b$ is finite, bounded, Lipschitz and semiconcave in $x$:
\begin{enumerate}
\item
There exists a constant $C$ such that
\[|V(x,Y,t)|\leq C,\quad \forall t\in[0,T],\,x,h\in\Rr^d, Y\in L^q(\Omega;\Rr^d).\]
\item
There exists a constant $C$ such that
\[
|V(x+h,Y,t)-V(x,Y,t)|\leq C|h|,\quad \forall t\in[0,T],\,x,h\in\Rr^d, Y\in L^q(\Omega;\Rr^d).
\]
\item
For any $t,t<T$ there exists a constant $C(t)$ such that
\[V(x+h,Y,t)+V(x-h,Y,t)-2V(x,Y,t)\leq C(t)|h|^2,\quad \forall x,h\in\Rr^d, Y\in L^q(\Omega;\Rr^d).\]
\end{enumerate}
Moreover, the constants are uniform in $b.$
\end{pro}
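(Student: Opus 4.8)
The plan is to transcribe, almost verbatim, the three estimates of Lemma \ref{blsmc} to the present setting; the only structural change is that the mean-field slots of the Lagrangian are now filled by the \emph{fixed} trajectory $(\bY(s),\dot\bY(s))$ coming from \eqref{YbY}, rather than by a mean-field fixed point. Since $b$, and hence $\bY$, is frozen, \eqref{gvalfunc} is a standard calculus-of-variations problem with the time-dependent Lagrangian $L^\sharp(x,v,s)=L(x,v,\bY(s),\dot\bY(s))$, and all three conclusions are classical optimal-control estimates. For boundedness (item 1) I would bound $V$ from above by the cost of the admissible trajectory with velocity $v_0(\bY(s),\dot\bY(s))$, giving $V\le c_1(T-t)+\|\psi\|_\infty$ by the second listed hypothesis, while $L\ge-c_0$ and the boundedness of $\psi$ give the matching lower bound.

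For the Lipschitz estimate (item 2), let $\bx^*$ be optimal for $(x,t)$ and use $\bx^*+y$ as a competitor for $(x+y,t)$; with $f(\tau)=\int_t^T L(\bx^*+\tau y,\dot{\bx}^*,\bY,\dot\bY)\,ds$ one has $f'(\tau)=\int_t^T D_xL\cdot y\le(Cf(\tau)+C)|y|$ from the bound $|D_xL|\le CL+C$ in \ref{L'}, so Gronwall yields $f\le C$ and then $f'\le C|y|$; combined with the $x$-Lipschitz bound on $\psi$ from \ref{psi'} and the symmetry $x\leftrightarrow x+y$ this gives item 2. For semiconcavity (item 3), fix $t<T$, take the time-vanishing perturbation $\mathbf{y}(s)=y\frac{T-s}{T-t}$ so that the terminal point is unchanged, and set $g(\tau)=\int_t^T L(\bx^*+\tau\mathbf{y},\dot{\bx}^*+\tau\dot{\mathbf{y}},\bY,\dot\bY)\,ds$; then $V(x+y,t)+V(x-y,t)-2V(x,t)\le g(1)+g(-1)-2g(0)\le\max_{[-1,1]}g''$, and $g''$ is controlled by the three second-derivative bounds of \ref{L'} together with $|\mathbf{y}|\le|y|$ and $|\dot{\mathbf{y}}|=|y|/(T-t)$. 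The factor $1/(T-t)^2$ produced by $\dot{\mathbf{y}}$ is precisely what forces the constant $C(t)$ to blow up as $t\to T$.

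The decisive new point is the uniformity in $b$, and here the improvement over Lemma \ref{blsmc} is structural: the second-derivative bounds of \ref{DL} carried the factor $E(|X|^q+|Z|^q+1)$, which had to be absorbed through the a priori moment estimate of Lemma \ref{c8} and thereby made the semiconcavity constant depend on $\mathrm{Lip}(\Phi)$. Hypothesis \ref{L'} instead gives $|D_xL|,|D^2_{xx}L|,|D^2_{xv}L|,|D^2_{vv}L|\le CL+C$ with \emph{no} moment factor, so no analog of Lemma \ref{c8} is needed and the resulting constants depend only on $c_0,c_1,C,\|\psi\|_\infty$ and the $x$-Lipschitz constant of $\psi$; none of these sees the particular trajectory $\bY$, so they are automatically uniform in $b$. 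The one step I expect to require genuine care is the Gronwall bound on $g$ that must precede the estimate on $g''$: hypothesis \ref{L'} does \emph{not} bound $D_vL$, yet $g'$ contains $\int_t^T D_vL\cdot\dot{\mathbf{y}}\,ds$. I would close this by observing that along the optimal trajectory $D_vL(\bx^*,\dot{\bx}^*,\bY,\dot\bY)$ equals the momentum $D_xV(\bx^*,s)$, which item 2 bounds by the Lipschitz constant; propagating this to the velocity-perturbed trajectory through the bounded $D^2_{vv}L$ and $D^2_{xv}L$ then yields $g'(\tau)\le C(t)(g(\tau)+1)$, whence $g(\tau)\le C(t)$.
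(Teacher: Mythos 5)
Your proposal is correct and is essentially the paper's own proof: the paper disposes of this proposition in a single line (``The proof is standard and follows the arguments in Section 4''), which is precisely the transcription of Lemma \ref{blsmc} to the frozen trajectory $(\bY,\dot\bY)$ that you carry out, including the observation that the hypotheses here carry no moment factor and hence no analogue of Lemma \ref{c8} is needed, which is what makes the constants uniform in $b$. Your further point that hypothesis \ref{L'} omits a bound on $D_vL$, repaired via the costate identity $D_vL=-D_xV$ along optimal trajectories together with the $D^2_{xv}L$, $D^2_{vv}L$ bounds, goes beyond what the paper records but remains within the same argument.
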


\begin{proof}
The proof is standard and follows the arguments in Section 4.
\end{proof}
In the following, we will present two simple results. These make the connection between master equation and extended mean-field game equation \eqref{mfge} from Section \ref{exist}. Let $V$ be a classical solution to \eqref{master'}, and let $\bX(\cdot)$ be a solution to the ODE
\[
\begin{cases}
\dot \bX=-D_pH(\bX, D_xV(\bX,\bX,t), \bX,\bdX)\\
\bX(0)=X_0.
\end{cases}
\]
\begin{pro}
Let $u(x,t)=V(x,\bX(t),t)$, then the pair $(u,\bX)$ solves equation \eqref{mfge}.
\end{pro}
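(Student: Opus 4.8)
The plan is to verify directly that the pair $(u,\bX)$ with $u(x,t)=V(x,\bX(t),t)$ satisfies every component of the extended mean-field system \eqref{mfge}: the initial and terminal conditions, the forward ODE for $\bX$, and the Hamilton-Jacobi equation for $u$. Since $V$ is assumed to be a classical solution of the Master Equation, $u$ inherits enough regularity in $x$ that it suffices to carry out the computation pointwise and to invoke the chain rule in $t$.

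The conditions that cost nothing come first. The terminal condition follows immediately from $V(x,Y,T)=\psi(x,Y)$, since $u(x,T)=V(x,\bX(T),T)=\psi(x,\bX(T))$; the initial condition $\bX(0)=X_0$ is built into the ODE defining $\bX$. For the forward equation, I would observe that differentiating $u(x,t)=V(x,\bX(t),t)$ in $x$ only touches the first slot, so $D_xu(x,t)=D_xV(x,\bX(t),t)$. Evaluating at $x=\bX(t)$ gives $D_xu(\bX(t),t)=D_xV(\bX(t),\bX(t),t)$, which is exactly the momentum variable appearing in the ODE that $\bX$ solves by hypothesis; hence $\bdX=-D_pH(\bX,D_xu(\bX,t),\bX,\bdX)$, i.e.\ the second line of \eqref{mfge}.

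The substance of the proof is the Hamilton-Jacobi equation. I would apply the chain rule to $t\mapsto V(x,\bX(t),t)$ with $x$ frozen, obtaining $u_t(x,t)=V_t(x,\bX(t),t)+D_YV(x,\bX(t),t)\cdot\bdX(t)$, where $D_YV\cdot\bdX$ denotes the Fr\'echet derivative in the $L^q(\Omega)$ slot paired with $\bdX(t)\in L^q(\Omega)$. The realized population trajectory satisfies $\bdX=b(\bX,t)=G(D_xV(\bX,\bX,t),\bX)$, which is precisely the velocity field entering the transport term of the PDE \eqref{ocpp} that $V$ satisfies. Evaluating \eqref{ocpp} along $Y=\bX(t)$ then reads $-V_t-D_YV\cdot\bdX+H(x,D_xV(x,\bX,t),\bX,\bdX)=0$, and comparing with the chain-rule expression shows that the transport term is exactly the correction $D_YV\cdot\bdX$, so that $u_t=H(x,D_xu,\bX,\bdX)$. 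This is the desired equation $-u_t+H(x,D_xu,\bX,\bdX)=0$, and $u$ is a classical (hence viscosity) solution because $V$ is.

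I expect the only real obstacle to be the rigorous justification of the infinite-dimensional chain rule: one must know that $V$ is genuinely (Fr\'echet) differentiable in its $L^q(\Omega)$ argument, that $D_YV(x,\cdot,t)$ is a bounded linear functional so that the pairing $D_YV\cdot\bdX(t)$ is well defined, and that $\bX\in C^1([0,T];L^q(\Omega))$ so that $t\mapsto V(x,\bX(t),t)$ is differentiable with the expected derivative. All of these are granted once $V$ is a classical solution and $\bX$ solves its ODE in $C^1$, so the remaining content is the bookkeeping of the transport term above; the one point demanding care is keeping the sign of that transport term consistent with the derivation of \eqref{ocpp}, since it is exactly this cancellation between the chain-rule correction and the transport term that produces the clean Hamilton-Jacobi equation for $u$.
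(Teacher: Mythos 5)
Your proof is correct and follows essentially the same route as the paper's: differentiate $u(x,t)=V(x,\bX(t),t)$ by the chain rule in $t$, identify $D_xu=D_xV$, substitute the ODE for $\bdX$, and evaluate the Master Equation at $Y=\bX(t)$ so that the transport term cancels the chain-rule correction $D_YV\cdot\bdX$. Your closing caution about the sign of the transport term is well placed: the cancellation requires the sign convention of \eqref{ocpp} (note that \eqref{master'} as printed carries the opposite sign on $D_YV\cdot G$, while the paper's own computation, like yours, uses the \eqref{ocpp} convention).
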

\begin{proof}
Since $V$ is smooth, we have
\[
D_xu(x,t)=D_xV(x,\bX(t),t),
\]
and
\[
u_t(x,t)=V_t+D_yV\cdot\dot X=V_t(x,\bX(t),t)-D_yV(x,\bX(t),t)\cdot D_pH(\bX(t), D_xV(\bX(t),\bX(t),t), \bX(t),\bdX(t)).
\]
Plugging this in \eqref{master'} with $Y=\bX(t)$  we get
\[
-u_t(x,t)+H( x, D_xu(x,t),\bX(t),\bdX(t))=0.
\]
Additionally, we have $u(x,T)=V(x,\bX(T),T)=\psi(x,\bX(T))$.
\end{proof}

Now we assume that $\psi$ and $L$ satisfy conditions \ref{psi}-\ref{hjuniqc}. Because of this, by Theorem \ref{teoexist}, for any $Y\in L^q_{ac}(\Omega,\Rr^d)$  there exist solutions $(u(x,s,t),\bX(s,t))$ to
\begin{equation}
\label{emfg*}
\begin{cases}
-u_s(x,s,t)+H(x, D_xu(x,s,t), \bX(s,t), \frac{\partial\bX}{\partial s}(s,t))=0\\
\frac{\partial\bX}{\partial s}(s,t)=-D_pH(\bX(s,t), D_xu(\bX(s,t),s),\bX(s,t), \frac{\partial\bX}{\partial s}(s,t))\\
u(x,T,t)=\psi(x,\bX(T)),\,\bX(t,t)=Y,
\end{cases}
\end{equation}
where we set $\widetilde{V}(x,Y,t)=u(x,t,t)$.

\begin{pro}
Assume $\psi$ and $L$ satisfy conditions \ref{psi}-\ref{hjuniqc}. Then $\widetilde{V}$ is a viscosity solution of
\begin{equation*}
\begin{cases}
-V_t(x,Y,t)+D_yV(x,Y,t)\cdot b(Y,t)+H(x, D_xV(x,Y,t),Y, b(Y,t))=0,\\
V(x,Y,T)=\psi(x,Y),
\end{cases}
\end{equation*}
for $b(Y,t)=G(D_xV(Y,Y,t),Y)$.
\end{pro}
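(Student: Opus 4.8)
The plan is to obtain the Master Equation for $\widetilde V$ not through a direct dynamic-programming computation in the $(Y,t)$ variables, but by transporting the already-known viscosity behaviour of the value functions $u(\cdot,\cdot,t)$ along the population flow. The engine of the argument is a \emph{semigroup (consistency) identity}: for fixed $t$, the pair $(u(\cdot,\cdot,t),\bX(\cdot,t))$ from \eqref{emfg*} restricts, on any subinterval $[s,T]$ with $s\ge t$, to a solution of the same system with datum $\bX(s,t)$ imposed at time $s$. Invoking the uniqueness of viscosity solutions of the Hamilton-Jacobi equation from Assumption \ref{hjuniqc}, together with the uniqueness of the characteristic ODE flow in $L^q(\Omega)$, this restricted pair coincides with the one selected at initial time $s$, so that
\[
\widetilde V(x,\bX(s,t),s)=u(x,s,t),\qquad t\le s\le T.
\]
This identity is exact, not merely infinitesimal, and is the whole point. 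The terminal condition is immediate from the definition, since $\widetilde V(x,Y,T)=u(x,T,T)=\psi(x,\bX(T,T))=\psi(x,Y)$.

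Given the identity, I would fix $(x_0,Y_0,t_0)$ with $t_0<T$ and a smooth test function $\phi(x,Y,t)$ for which $\widetilde V-\phi$ has a local maximum (resp. minimum) there, and compose $\phi$ with the flow, setting $\tilde\phi(x,s):=\phi(x,\bX(s,t_0),s)$. Because $\bX(\cdot,t_0)\in C^{1,1}([0,T];L^q(\Omega))$ by Theorem \ref{teoexist}, the function $\tilde\phi$ is an admissible $C^1$ test function in $(x,s)$, and the curve $s\mapsto(x,\bX(s,t_0),s)$ passes through $(x_0,Y_0,t_0)$ at $s=t_0$ since $\bX(t_0,t_0)=Y_0$. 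Hence $(x_0,t_0)$ is a local extremum of $w(x,s)-\tilde\phi(x,s)$, where $w(x,s):=\widetilde V(x,\bX(s,t_0),s)=u(x,s,t_0)$. As $w=u(\cdot,\cdot,t_0)$ is a viscosity solution in $(x,s)$ of the Hamilton-Jacobi equation in \eqref{emfg*}, the test inequality for $\tilde\phi$ gives
\[
-\tilde\phi_s(x_0,t_0)+H\big(x_0,D_x\tilde\phi(x_0,t_0),\bX(t_0,t_0),\partial_s\bX(t_0,t_0)\big)\le 0
\]
(resp. $\ge 0$). It then remains to unwind the composition by the chain rule: $D_x\tilde\phi=D_x\phi$, $\partial_s\bX(t_0,t_0)=b(Y_0,t_0)$, and $\tilde\phi_s(x_0,t_0)=\phi_t(x_0,Y_0,t_0)+D_Y\phi(x_0,Y_0,t_0)\cdot b(Y_0,t_0)$. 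Substituting $\bX(t_0,t_0)=Y_0$ yields precisely the sub/supersolution inequality for the Master Equation, the transport term $D_Y\phi\cdot b$ being exactly the contribution of differentiating $\widetilde V$ along the population flow $\bX(\cdot,t_0)$.

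Two routine points accompany this. First, $b(Y_0,t_0)=G(D_xV(Y_0,Y_0,t_0),Y_0)$ must be well-defined: this uses Assumption \ref{solvb} for $G$ and the almost-sure differentiability of $u$ at the diagonal points $(\bX(t),t)$ furnished by Theorem \ref{teoexist}, which is where the absolute continuity of the law of $Y\in L^q_{ac}(\Omega;\Rr^d)$ enters. Second, the uniform Lipschitz and semiconcavity estimates of Lemma \ref{blsmc} ensure that $\widetilde V$ is continuous and Lipschitz in $x$, so the viscosity test is meaningful.

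The main obstacle is not the viscosity bookkeeping above, which is essentially forced once the identity is in hand, but the justification of the identity itself. Under Assumptions \ref{x0}--\ref{hjuniqc}, Theorem \ref{teoexist} provides existence of the coupled solution via a Schauder fixed point and not uniqueness, so one must argue that the family $\{(u(\cdot,\cdot,t),\bX(\cdot,t))\}_{t\in[0,T]}$ is selected consistently, i.e. that restricting the solution chosen at $t$ reproduces the solution chosen at any later $s$. The clean way to secure this is to generate the whole family from a single characteristic flow in $L^q(\Omega)$ — whose solutions are unique by the Picard-Lindelof argument following \eqref{hjfl2} — and to use the uniqueness of viscosity solutions from Assumption \ref{hjuniqc} to pin down $u$ once the population path is fixed; the fixed-point step is then invoked only to produce the path, and the semigroup property is inherited from uniqueness of the ODE flow rather than from uniqueness of the full mean-field system.
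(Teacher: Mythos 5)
Your proposal is correct and follows essentially the same route as the paper: both rest on the identity $\widetilde V(x,\bX(s,t),s)=u(x,s,t)$ and then transfer the viscosity inequalities by composing the test function with the population flow, $\varphi(\cdot,s)=\phi(\cdot,\bX(s,t),s)$, and unwinding via the chain rule using $D_x\varphi=D_x\phi$ and $\frac{\partial\bX}{\partial s}(t,t)=b(Y,t)$, with the terminal condition read off from $\bX(\cdot,T)\equiv Y$. The only difference is one of care: the paper asserts the key identity ``by definition of $\widetilde V$,'' whereas you correctly flag that, absent uniqueness for the mean-field system produced by the Schauder fixed point in Theorem \ref{teoexist}, this is a consistency property of the selection that must be secured along each characteristic flow --- a refinement of the paper's argument rather than a different proof.
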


\begin{proof}
Note that by definition of $\widetilde{V}$,  $\widetilde{V}(x,\bX(s,t),s)=u(x,s,t)$. Let $\phi\colon\Rr^d\times L^q(\Omega;\Rr^d)\times \Rr$ be a continuous function, differentiable at  $(x,Y,t)$. Suppose $\widetilde{V}-\phi$ has a local maximum at $(x,Y,t)$. Let $\varphi(\cdot,s)=\phi(\cdot,X(s,t),s)$. Hence $\varphi$ is a continuous function that is differentiable at  $(x,t)$, and  $u-\varphi$ has a local maximum at $(x,t)$. Since $u$ is a viscosity solution of the Hamilton-Jacobi equation, we have
\[
-\varphi_t(x,t)+H(x, D_x\varphi(x,t),\bX(t,t),\frac{\partial\bX}{\partial s} (t,t))\leq 0.
\]
Because
$\varphi_t(x,t)=\phi_t(x,\bX(t, t),t)+D_y\phi(x,\bX(t, t),t)\cdot \frac{\partial\bX}{\partial s}(t, t)=\phi_t(x,Y,t)+D_y\phi(x,Y,t)\cdot b(Y,t) $
and
$D_x\varphi(x,t)=D_x\phi(x,Y,t)$, we get
\[
-\phi_t(x,Y,t)-D_y\phi(x,Y,t)\cdot b(Y,t)+H(x, D_x\phi(x,Y,t),Y,b(Y,t))\leq 0.
\]
This proves that $\widetilde{V}$ is a viscosity subsolution. Similarly, we show that it is also a supersolution. For $t=T,$  $\bX(\cdot
,T)\equiv Y$ thus, $\widetilde{V}(x,Y,T)=u(x,T,T)=\psi(x,Y).$

\end{proof}

\subsection{Conclusion}

The random variable framework developed in this paper is a powerful tool for studying of mean-field games. It allows the formulation of extensions of the original problem in which each player takes into consideration the actions of the rest of the population. We obtained new existence and uniqueness results that extend those presently available. We provide conditions for the absolute continuity of the law and exhibited two problems for which exact solutions can be computed.  
 
The master equation was introduced by P.L. Lions in his lectures at College de France. It makes it possible to study mean-field game problems using a single equation, making it an important aspect of the general theory of mean-field games. The master equation and its solution can be defined within the framework of this paper. Propositions 3 and 4 illustrate the connection between solutions of the master equation and the extended mean-field games system. Finally, the master equation can be used to formulate games with correlations between the players caused by a common Brownian noise without using backward-forward stochastic differential equations. Recent substantial progress on this subject was achieved in \cite{ Carmona3, Carmona2, Carmona1}. Nevertheless, many questions remain unanswered, and we regard this as an important direction that should be pursued further.   

\appendix

\section*{Appendix: Proof of Lemma \ref{blsmc}}\label{appendix}

Here, we give the proof of Lemma \ref{blsmc} and determine explicitly 
the dependence of $c_1$, $c_5$, and $c_6$ on the data of the problem.

\begin{proof}
	For the first claim, note that
	$$
	\tilde{u}(x,t)\leq \int\limits_t^TL(\bx,v_0(\bX,\dot{\bX}),\bX,\dot{\bX})+\psi(\bx(T),\bX(T))\leq(T-t)c_1+\|\psi\|_{\infty}.
	$$
	
	To prove that $\tilde{u}$ is Lipschitz, take $x,y\in\Rr^d$, with $|y|\leq 1$. Let $x^*$ be the optimal trajectory at a point $(x,t)$. Such optimal trajectory exists by standard control theory arguments.  We have
	$
	\tilde{u}(x,t)= \int\limits_t^TL({\bx}^*,\dot{\bx}^*, \bX,\dot{\bX})+\psi({\bx}^*(T),\bX(T)),
	$
	and
	$
	\tilde{u}(x+y,t)\leq\int\limits_t^TL({\bx}^*+y, \dot{\bx}^*, \bX, \dot{\bX})+\psi({\bx}^*(T)+y,\bX(T)).
	$
	Let
	$
	f(\tau)=\int\limits_t^T L({\bx}^*+\tau y, \dot{\bx}^*, \bX, \dot{\bX}).
	$
	Then
	$
	f(0)=\tilde{u}(x,t)-\psi({\bx}^*(T),\bX(T))\leq C(T-t)+C,
	$
	where the constant $C$ depends only on $L, \psi$ and $T.$ Using Assumption \ref{DL}, we obtain
	$$
	f'(\tau)=\int\limits_t^T D_xL({\bx}^*+\tau y, \dot{\bx}^*,\bX,\dot{\bX})\cdot y\leq\int\limits_t^T (c_2L({\bx}^*+\tau y, \dot{\bx}^*,\bX,\dot{\bX})+c_3)|y|
	\leq (c_2f(\tau)+Tc_3)|y|.
	$$
	Consequently, by Gronwall inequality, $f(\tau)\leq C$ and $f'(\tau)\leq (C(T-t)+C)|y|$.
	Therefore,
	$$
	\tilde{u}(x+y,t)-\tilde{u}(x,t)\leq f(1)-f(0)+\psi({\bx}^*(T)+y,\bX(T))-\psi({\bx}^*(T),\bX(T))\leq (C(T-t)+C)|y|.
	$$
The previous estimate proves that $\tilde{u}$ is uniformly Lipschitz in $x.$\\
	For the semi-concavity, we take any $t_1<T,$ $t\leq t_1,$ $x,y\in\Rr^d$ with $|y|\leq 1$, $\bx^*$ as above, $\mathbf{y}(s)=y\frac{T-s}{T-t}$,
	and let
	$$
	g(\tau)=\int\limits_t^T L({\bx}^*(s)+\tau \mathbf{y}(s), \dot{\bx}^*(s)+\tau \dot{\mathbf{y}}(s),\bX(s),\dot{\bX}(s))ds.
	$$
	Using  Lemma \ref{c8} with the bounds on $DL$ from Assumption \ref{DL},  we get
	$g'(\tau)\leq C(Lip(\Phi ),T-t_1)(g(\tau)+1)$.
	Hence, by Gronwall inequality, $g(\tau)\leq C'(Lip(\Phi ),T-t_1)$.
	Similarly, using Lemma \ref{c8} with the  bounds on $D^2L$ from Assumption \ref{DL}, we get
	\begin{equation*}
	\begin{split}
	g''(\tau)
	\leq
	(1+c_4)(C g(\tau)+C) \left(C+\frac{C}{(T-t)^2}\right)|y|^2\leq
	c'|y|^2 ,
	\end{split}
	\end{equation*}
	where $c'$ depends only on $L, \psi, T,$ $Lip(\Phi)$ and $T-t_1$.
	We conclude
	$$
	\tilde{u}(x+y,t)+\tilde{u}(x-y,t)-2\tilde{u}(x,t)\leq g(1)+g(-1)-2g(0)\leq2\max\limits_{[-1,1]} g''\leq c_6|y|^2.
	$$
\end{proof}

\bibliographystyle{plain}
\bibliography{mfg}

\end{document}